\documentclass{amsart}
\usepackage{amssymb}
\usepackage{amscd}
\usepackage{multicol}
\usepackage{tikz}
\usepackage{graphics}
\usepackage{shuffle}
\usepackage[OT2,T1]{fontenc}
\usepackage[all]{xy}
\usepackage{extarrows}
\usepackage{comment}
\DeclareSymbolFont{cyrletters}{OT2}{wncyr}{m}{n}
\DeclareMathSymbol{\Sha}{\mathalpha}{cyrletters}{"58}

\title[Shuffle-type product formulae of desingularized values of MZFs]
{Shuffle-type product formulae of desingularized values of multiple zeta-functions\\
}
\author{Nao Komiyama}

\address{Graduate School of Mathematics, Nagoya University, 
Furo-cho, Chikusa-ku, Nagoya 464-8602 Japan }
\email{m15027u@math.nagoya-u.ac.jp}
\date{\today}

\newtheorem{thm}{Theorem}[section]
\newtheorem{lmm}[thm]{Lemma}
\newtheorem{crl}[thm]{Corollary}
\newtheorem{prp}[thm]{Proposition}  

\theoremstyle{remark}
        
\keywords{multiple zeta function, desingularization}

\numberwithin{equation}{section}
\theoremstyle{definition}
\newtheorem{dfn}[thm]{Definition}
\newtheorem{rem}[thm]{Remark}
\newtheorem{notation}[thm]{Notation}     

\newtheorem{exa}[thm]{Examples}

\newcommand{\Bold}[1]{\mbox{\boldmath #1}}

\begin{document}
\bibliographystyle{amsalpha+}
\maketitle

\begin{abstract}      
It is known that there are infinitely many singularities of multiple zeta functions and the special values at non-positive integer points are indeterminate. In order to give a suitable rigorous meaning of the special values there, Furusho, Komori, Matsumoto and Tsumura introduced desingularized values by using their desingularization method to resolve all singularities. On the other hand, Ebrahimi-Fard, Manchon and Singer introduced renormalized values by the renormalization method \`{a} la Connes and Kreimer and they showed that the values fulfill the shuffle-type product formula. In this paper, we show the shuffle-type product formulae for desingularized values.
\end{abstract}
\setcounter{section}{-1}
\section{Introduction}

In 1776, Euler (\cite{Euler}) considered certain power series, the so-called double zeta values, and showed several relations among them. More than 200 years later, the {\it multiple zeta value} (MZV for short) which is more general series
$$\zeta(k_1, \dots, k_r) := \sum_{0<m_1<\cdots<m_r}\frac{1}{m_1^{k_1}\cdots m_r^{k_r}}$$
converging for $k_1,\dots,k_r\in\mathbb{N}$ and $k_r>1$, was discussed by Ecalle (\cite{Eca}) in 1981. In 1990s, these values also came to be focused by Hoffman (\cite{Hof}) and Zagier (\cite{Zag}). MZVs admit an iterated integral expressions, which enable us to regard them as a period of a certain motives (\cite{DG}, \cite{Go} and \cite{Te}) and calculate the Kontsevich invariant in knot theory (\cite{LM}). MZVs are also related to mathematical physics (\cite{BK1} and \cite{BK2}).

MZVs are regarded as special values at positive integer points of the several variables complex analytic function, the {\it multiple zeta-function} (MZF for short), which is defined by
\begin{equation*}
	\zeta(s_1, \dots, s_r) := \sum_{0<m_1<\cdots<m_r}\frac{1}{m_1^{s_1}\cdots m_r^{s_r}}.
\end{equation*}
It converges absolutely in the region
\begin{equation*}
	\{(s_1,\dots,s_r)\in\mathbb{C}^r\ |\ \frak{R}(s_{r-k+1}+\cdots+s_r)>k\ (1\leq k\leq r)\}.
\end{equation*}
 In the early 2000s, Zhao (\cite{Zhao}) and Akiyama, Egami and Tanigawa (\cite{AET}) independently showed that MZF can be meromorphically continued to $\mathbb{C}^r$. Especially, in \cite{AET}, the set of all singularities of the function $\zeta(s_1,\dots,s_r)$ is determined as
	\begin{align*}
		&s_r=1,\nonumber\\
		&s_{r-1}+s_r=2,1,0,-2,-4,\dots,\\
		&s_{r-k+1}+\cdots+s_r=k-n\quad (3\leq k\leq r,\ n\in\mathbb{N}_0).\nonumber
	\end{align*}
Because almost all of integer points with non-positive arguments are located in the above singularities, the special values of MZF there are indeterminate in all cases except for $\zeta(-k)$ at $k\in\mathbb{N}_0$, and $\zeta(-k_1,-k_2)$ at $k_1,k_2\in\mathbb{N}_0$ with $k_1+k_2$ odd. Actually, giving a nice definition of ``$\zeta(-k_1,\dots,-k_r)$'' for $k_1,\dots,k_r\in\mathbb{N}_0$ is one of our most fundamental problems.

In order to resolve all infinitely many singularities of MZF, the desingularization method was introduced  by Furusho, Komori, Matsumoto and Tsumura in \cite{FKMT1}. By applying this method to $\zeta(s_1,\dots,s_r)$, they constructed the {\it desingularized MZF} $\zeta_r^{\rm des}(s_1,\dots,s_r)$ which is entire on the whole space $\mathbb{C}^r$. The functions are represented by finite linear combinations of shifted MZFs (cf. Proposition \ref{prp:1.1}.). The {\it desingularized value}
\begin{equation*}
\zeta_r^{\rm des}(-k_1,\dots,-k_r)\in\mathbb{C}
\end{equation*}
is defined to be the special value of $\zeta_r^{\rm des}(s_1,\dots,s_r)$ at $(s_1,\dots,s_r)=(-k_1,\dots,-k_r)$ for $k_1,\dots,k_r\in\mathbb{N}_0$ (see Definition \ref{def:1.2.1}). In \cite{FKMT1}, its generating function given by
\begin{equation}\label{eqn:0.4}
	Z_{\scalebox{0.5}{\rm FKMT}}(t_1,\dots,t_r) := \sum_{k_1,\dots,k_r=0}^{\infty}\frac{(-t_1)^{k_1}\cdots(-t_r)^{k_r}}{k_1!\cdots k_r!}\zeta_r^{\rm des}(-k_1,\dots,-k_r)
\end{equation}
 in $\mathbb{C}[[t_1,\dots,t_r]]$ was calculated and desingularized values were explicitly described in terms of the Bernoulli numbers (see Proposition \ref{prp:1.1.1}.).

In contrast, Connes and Kreimer (\cite{CK}) started a Hopf algebraic approach to the renormalization procedure in the perturbative quantum field theory. A fundamental tool in their work is the {\it algebraic Birkhoff decomposition}. By applying this decomposition to a certain Hopf algebra related to MZVs, Guo and Zhang (\cite{GZ}) introduced the {\it renormalized values} which satisfy the harmonic-type product formulae. Later, Manchon and Paycha (\cite{MP}) and Ebrahimi-Fard, Manchon and Singer (\cite{EMS2}) introduced the different renormalized values which obey the harmonic-type product formulae by using different Hopf algebras. Ebrahimi-Fard, Manchon and Singer (\cite{EMS1}) also introduced another type of the renormalized values satisfying the shuffle-type product. We denote their values as
\begin{equation*}
\zeta_{\scalebox{0.5}{\rm EMS}}(-k_1,\dots,-k_r)\in\mathbb{C}
\end{equation*}
for $k_1,\dots,k_r\in \mathbb{N}_0$, and their generating function as
\begin{equation}\label{eqn:0.5}
	Z_{\scalebox{0.5}{\rm EMS}}(t_1,\dots,t_r) := \sum_{k_1,\dots,k_r=0}^{\infty}\frac{(-t_1)^{k_1}\cdots(-t_r)^{k_r}}{k_1!\cdots k_r!}\zeta_{\scalebox{0.5}{\rm EMS}}(-k_1,\dots,-k_r)
\end{equation}
 in $\mathbb{C}[[t_1,\dots,t_r]]$. In the paper \cite{Komi}, the author revealed the following relationship between generating functions (\ref{eqn:0.4}) and (\ref{eqn:0.5}):
\begin{equation}\label{eqn:0.8}
	Z_{\scalebox{0.5}{\rm EMS}}(t_1,\dots,t_r) = \prod_{i=1}^{r}\frac{1-e^{-t_i-\cdots-t_r}}{t_i+\cdots+t_r}\cdot Z_{\scalebox{0.5}{\rm FKMT}}(-t_1,\dots,-t_r).
\end{equation}
The following recurrence formulae were essential for the proof of the equation (\ref{eqn:0.8}):
\begin{equation}\label{eqn:0.11}
	Z_{\scalebox{0.5}{\rm FKMT}}(t_1,\dots,t_r) = Z_{\scalebox{0.5}{\rm FKMT}}(t_2,\dots,t_r)\cdot Z_{\scalebox{0.5}{\rm FKMT}}(t_1+\cdots+t_r).
\end{equation}
By these recurrence formulae, we will show our main theorem (Theorem \ref{prop:2.1}) that $\zeta_r^{\rm des}(-k_1,\dots,-k_r)$ fulfills the following shuffle-type product formula (\ref{thm:0.1}) which are shown to hold for $\zeta_{\scalebox{0.5}{\rm EMS}}(-k_1,\dots,-k_r)$ by \cite{EMS1}:\\

\noindent
\smallskip
{\bf Theorem \ref{prop:2.1}}
{\it For $k_1,\dots,k_p,l_1,\dots,l_q\in\mathbb{N}_0$, we have}
	\begin{align}\label{thm:0.1}
		&&\zeta_p^{\rm des}(-k_1,\dots,-k_p)\zeta_q^{\rm des}(-l_1,\dots,-l_q)\hspace{6.6cm} \\
		&&= \sum_{\substack{i_1 + j_1=l_1\\ \scalebox{0.5}{\rotatebox{90}{$\cdots$}}\\i_q + j_q=l_q}}\prod_{a=1}^q(-1)^{i_a}\binom{l_a}{i_a} \zeta_{p+q}^{\rm des}(-k_1,\dots,-k_{p-1},-k_p-i_1- \cdots -i_q,-j_1,\dots,-j_q). \nonumber
	\end{align}
The above recurrence formula (\ref{eqn:0.11}) also yields
\begin{equation}\label{eqn:0.9}
\zeta_r^{\rm des}(-k_1,\dots,-k_r)=\sum_{\substack{i+j=k_r \\
	i,j\geq0}}
	\binom{k_r}{i}\zeta_{r-1}^{\rm des}(-k_1,\dots,-k_{r-2},-k_{r-1}-i)\zeta_1^{\rm des}(-j)
\end{equation}
for $k_1,\dots,k_r\in\mathbb{N}_0$. We will extend the equation (\ref{eqn:0.9}) to the equation (\ref{eqn:4.1}) by replacing $-k_1,\dots,-k_{r-1}\in\mathbb{Z}_{\leq0}$ with $s_1,\dots,s_{r-1}\in\mathbb{C}$ in Proposition \ref{thm:3.1}.

The plan of our paper goes as follows. In \S1, we will review the algebraic Birkoff decomposition and the definition of the renormalized values in \cite{EMS1}. In \S2, we will recall the definition of the desingularized MZFs and the desingularized values introduced by Furusho, Komori, Matsumoto and Tsumura in \cite{FKMT1}. In \S3, we will prove the shuffle-type product formulae of desingularized values at non-positive integer points (Theorem \ref{prop:2.1}). In \S4, we will show the formula (\ref{eqn:4.2}) in Proposition \ref{thm:3.1}, which generalizes  the equation (\ref{thm:0.1}) in the case of $q=1$.

\section{Algebraic Birkhoff decomposition and renormalized values}
In this section, we assume $\mathcal{H}$ is a Hopf algebra over $\mathbb{Q}$, $\mathcal{A}:=\mathbb{Q}[[z]][z^{-1}]$ and $\mathcal{L}(\mathcal{H},\mathcal{A}):=\{f:\mathcal{H}\rightarrow\mathcal{A}\ |\ \mbox{$f$ is a $\mathbb{Q}$-linear map}\}$. For the maps $f,g\in\mathcal{L}(\mathcal{H},\mathcal{A})$, we define the convolution $f*g\in\mathcal{L}(\mathcal{H},\mathcal{A})$ by
\begin{equation*}
	f*g:=m\circ(f\otimes g)\circ\Delta,
\end{equation*}
where $m$ is the product of $\mathcal{A}$ and $\Delta$ is the coproduct of $\mathcal{H}$. Then, the subset
\begin{equation*}
	G(\mathcal{H},\mathcal{A}):=\{\ f\in\mathcal{L}(\mathcal{H},\mathcal{A})|\ f(1)=1\}
\end{equation*}
forms a group with the above convolution product $*$.
\begin{thm}[\cite{CK}, \cite{EMS1}: {\bf the algebraic Birkhoff decomposition}]\label{thm:1.1}
	\ \\For $f\in G(\mathcal{H},\mathcal{A})$, there are unique linear maps $f_+:\mathcal{H}\rightarrow\mathbb{Q}[[z]]$ and $f_-:\mathcal{H}\rightarrow\mathbb{Q}[z^{-1}]$ with $f_-(1)=1\in\mathbb{Q}$ such that
	\begin{equation*}
		f=f_-^{-1}*f_+,
	\end{equation*}
	where $f_-^{-1}$ is the inverse element of $f_-$ in $G(\mathcal{H},\mathcal{A})$. Moreover the maps $f_-$ and $f_+$ form algebra homomorphisms if the map $f$ is an algebra homomorphism.
\end{thm}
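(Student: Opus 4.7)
The plan is to carry out the classical Bogoliubov--Birkhoff recursion, exploiting the Rota--Baxter structure on $\mathcal{A}$. The target algebra splits as $\mathcal{A}=\mathcal{A}_+\oplus\mathcal{A}_-$ with $\mathcal{A}_+:=\mathbb{Q}[[z]]$ and $\mathcal{A}_-:=z^{-1}\mathbb{Q}[z^{-1}]$; let $R\colon\mathcal{A}\to\mathcal{A}$ denote the projection onto $\mathcal{A}_-$ along $\mathcal{A}_+$. Since both summands are subalgebras, decomposing $a=a_++a_-$ and $b=b_++b_-$ and expanding immediately gives the Rota--Baxter identity of weight $-1$:
\begin{equation*}
R(a)R(b)+R(ab)=R\bigl(R(a)b+aR(b)\bigr),\qquad a,b\in\mathcal{A}.
\end{equation*}

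Next, writing the reduced coproduct as $\widetilde{\Delta}(x)=\sum x'\otimes x''$ for $x\in\ker\varepsilon$, I would define $f_-$ by the Bogoliubov recursion
\begin{equation*}
f_-(1):=1,\qquad f_-(x):=-R\Bigl(f(x)+\sum f_-(x')\,f(x'')\Bigr),
\end{equation*}
which is well-defined by induction on the coradical filtration of $\mathcal{H}$, assumed to be connected graded or filtered as in the Connes--Kreimer setup. By construction $f_-(\ker\varepsilon)\subset\mathcal{A}_-$, so $f_-$ lands in $\mathbb{Q}[z^{-1}]$. Setting $f_+:=f_-*f$ and applying the same recursion yields
\begin{equation*}
f_+(x)=(\mathrm{id}-R)\Bigl(f(x)+\sum f_-(x')f(x'')\Bigr)\in\mathcal{A}_+
\end{equation*}
for $x\in\ker\varepsilon$, so $f_+$ maps to $\mathbb{Q}[[z]]$, and $f=f_-^{-1}*f_+$ is immediate. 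For uniqueness, a competing decomposition $f=g_-^{-1}*g_+$ gives $g_-*f_-^{-1}=g_+*f_+^{-1}$, which takes values simultaneously in $\mathbb{Q}[z^{-1}]$ and $\mathbb{Q}[[z]]$; intersecting these and using $g_-(1)=f_-(1)=1$ in a filtration induction forces $g_\pm=f_\pm$.

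For the multiplicativity statement, assuming $f$ is an algebra homomorphism I would prove $f_-(xy)=f_-(x)f_-(y)$ by induction on the filtration degree. The inductive step expands $\Delta(xy)=\Delta(x)\Delta(y)$ inside the recursive formula for $f_-(xy)$; the resulting expression rearranges, via the inductive hypothesis, into a combination of terms of the form $R(a)R(b)$, $R(R(a)b)$, $R(aR(b))$ and $R(ab)$ that cancel in pairs precisely by the Rota--Baxter identity displayed above. Once $f_-$ is multiplicative, so is $f_+=f_-*f$, since in $G(\mathcal{H},\mathcal{A})$ the convolution of two characters is again a character.

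The main obstacle, in my view, is this multiplicativity argument: although it is a formal consequence of the Rota--Baxter identity, tracking the many terms produced by $\Delta(xy)$ and pairing them against $R(a)R(b)=R(R(a)b+aR(b)-ab)$ is the one step that requires a genuine calculation; everything else reduces to bookkeeping with the direct sum decomposition and the definition of the convolution.
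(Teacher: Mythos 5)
This theorem is stated in the paper without proof: it is imported verbatim from \cite{CK} and \cite{EMS1}, so there is no internal argument to compare yours against. What you propose --- the Bogoliubov preparation map, the minimal-subtraction projection $R$ onto $z^{-1}\mathbb{Q}[z^{-1}]$ viewed as a Rota--Baxter operator of weight $-1$, recursion over the coradical filtration, and the Rota--Baxter identity to propagate multiplicativity to $f_-$ --- is exactly the standard proof given in those references, and your outline of it is correct, including your identification of the multiplicativity of $f_-$ as the only step requiring real computation. Two caveats are worth recording. First, the recursion defining $f_-$ only terminates when $\mathcal{H}$ is connected filtered (so that $\widetilde{\Delta}$ is locally conilpotent); the paper's blanket hypothesis ``$\mathcal{H}$ is a Hopf algebra over $\mathbb{Q}$'' is too weak for the theorem as stated, and you are right to insert the Connes--Kreimer connectedness assumption, which does hold for the Hopf algebra $\mathcal{H}_0$ actually used later. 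Second, your uniqueness argument needs the sharper normalization $f_-(\ker\varepsilon)\subset z^{-1}\mathbb{Q}[z^{-1}]$ rather than merely $f_-(1)=1$: the character $g_-*f_-^{-1}=g_+*f_+^{-1}$ takes values in $\mathbb{Q}[z^{-1}]\cap\mathbb{Q}[[z]]=\mathbb{Q}$, not in $\{0\}$, so with only $f_-(1)=1$ one may shift a constant between $f_-$ and $f_+$ on primitive elements and uniqueness genuinely fails (already for $\mathcal{H}=\mathbb{Q}[x]$ with $x$ primitive). This imprecision is inherited from the paper's statement; your construction silently repairs it, since the $f_-$ produced by the recursion lands in $R(\mathcal{A})$ on $\ker\varepsilon$, and the uniqueness claim should be read with that normalization.
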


Let $\mathbb{Q}\langle d,y\rangle$ be the $\mathbb{Q}$-vector space generated by the word (including 1) of $d$ and $y$. We define the $\mathbb{Q}$-algebra $(\mathbb{Q}\langle d,y\rangle,\shuffle_0)$ by the new product $\shuffle_0:\mathbb{Q}\langle d,y\rangle^{\otimes2}\rightarrow\mathbb{Q}\langle d,y\rangle$ which is a $\mathbb{Q}$-linear map recursively defined by 
\begin{align*}
	1\shuffle_0 w & :=w\shuffle_01:=w, \\
	yu\shuffle_0v & :=u\shuffle_0yv:=y(u\shuffle_0 v), \\
	du\shuffle_0 dv & :=d(u\shuffle_0 dv)-u\shuffle_0 d^2v,
\end{align*}
for words $u$,$v$ and $w$ of $d$ and $y$. This algebra $(\mathbb{Q}\langle d,y\rangle,\shuffle_0)$ forms a non-commutative algebra. We consider the following set $\mathcal{S}$:
\begin{equation*}
	\mathcal{S}:=\langle d^k\{d(u\shuffle_0v)-du\shuffle_0v-u\shuffle_0dv\},\ wd\ |\ \mbox{$u,v,w$: words, $k\in\mathbb{N}_0$}\rangle_{(\mathbb{Q}\langle d,y\rangle,\shuffle_0)},
\end{equation*}
that is, to be the two-sided ideal of $(\mathbb{Q}\langle d,y\rangle,\shuffle_0)$ algebraically generated by the above elements. Then, the quotient
\begin{equation*}
	\mathcal{H}_0:=\mathbb{Q}\langle d,y\rangle/\mathcal{S},
\end{equation*}
forms a commutative and cocommutative Hopf algebra (its coproduct is not the deconcatenation coproduct. For detail, see \cite{EMS1},\cite{Komi}). We define the $\mathbb{Q}$-linear map $\phi:\mathcal{H}_0\rightarrow\mathcal{A}$ by $\phi(1):=1$ and for $k_1,\dots,k_n\in\mathbb{N}_0$,
\begin{equation*}
	\phi(d^{k_1}y\cdots d^{k_r}y)(z):=\partial^{k_1}_z(x\partial^{k_2}_z)\cdots (x\partial^{k_r}_z)(x(z)),
\end{equation*}
where $x:=x(z):=\frac{e^z}{1-e^z}\in\mathbb{Q}[[z]][z^{-1}]$ and $\partial_z$ is the derivative by $z$.
\begin{prp}[{\rm \cite[\S4.2]{EMS1}}]\label{prp:2.1}
	The $\mathbb{Q}$-linear map $\phi:\mathcal{H}_0\rightarrow\mathbb{Q}[[z]][z^{-1}]$ is well-defined and forms algebra homomorphism.
\end{prp}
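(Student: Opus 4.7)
The plan is to extend $\phi$ to all of $\mathbb{Q}\langle d,y\rangle$ by declaring $\phi(w):=0$ whenever $w$ ends in the letter $d$, then to prove that the extended map is an algebra homomorphism $(\mathbb{Q}\langle d,y\rangle,\shuffle_0)\to(\mathcal{A},\cdot)$ and annihilates every generator of $\mathcal{S}$, so that it descends to $\mathcal{H}_0$. The underlying intuition is the operator dictionary $y\leftrightarrow\text{multiplication by }x$ and $d\leftrightarrow\partial_z$; directly from the definition (splitting on whether $w$ ends in $y$ or in $d$) one checks the two basic identities
\begin{equation*}
\phi(yw)=x\cdot\phi(w),\qquad \phi(dw)=\partial_z\phi(w)
\end{equation*}
for every word $w$, which will be the workhorses of everything that follows.

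The heart of the matter is the identity $\phi(u\shuffle_0 v)=\phi(u)\phi(v)$, which I would prove by induction on $|u|$ with a secondary induction on $|v|$, following the three defining rules of $\shuffle_0$. The case $u=1$ is trivial; if $u=yu'$ the claim reduces via the outer induction to $\phi(y(u'\shuffle_0 v))=x\,\phi(u'\shuffle_0 v)$. If $u$ begins with $d$ and $v$ begins with $y$ (or $v=1$), the rule $u\shuffle_0 yv'=y(u\shuffle_0 v')$ together with the inner induction handles it. The decisive case is $u=du'$, $v=dv'$: from $du'\shuffle_0 dv'=d(u'\shuffle_0 dv')-u'\shuffle_0 d^2v'$, applying $\phi$ with the outer induction hypothesis and the basic identities yields
\begin{equation*}
\phi(u\shuffle_0 v)=\partial_z\!\left[\phi(u')\,\partial_z\phi(v')\right]-\phi(u')\,\partial_z^2\phi(v')=\partial_z\phi(u')\cdot\partial_z\phi(v')=\phi(u)\phi(v)
\end{equation*}
by the Leibniz rule. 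This is the main obstacle, and also the reason the third rule of $\shuffle_0$ is designed precisely as it is: the compensating term $-u\shuffle_0 d^2v$ exists exactly to cancel the unwanted cross-term produced by Leibniz; the other cases are then bookkeeping.

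It remains to check that $\phi$ vanishes on the generators of $\mathcal{S}$. For words of the form $wd$ this is immediate from the extension. For $h:=d(u\shuffle_0 v)-du\shuffle_0 v-u\shuffle_0 dv$, the multiplicativity established above gives
\begin{equation*}
\phi(h)=\partial_z(\phi(u)\phi(v))-\partial_z\phi(u)\cdot\phi(v)-\phi(u)\cdot\partial_z\phi(v)=0,
\end{equation*}
again by Leibniz; prepending $d^k$ simply multiplies by $\partial_z^k$ (via the basic identity) and preserves vanishing. Since $\phi$ is a $\shuffle_0$-algebra map, its kernel is a two-sided ideal containing all of these generators, so $\phi(\mathcal{S})=0$ and $\phi$ descends to a well-defined $\mathbb{Q}$-linear map $\mathcal{H}_0\to\mathcal{A}$, which is an algebra homomorphism by the multiplicativity proved in the previous step.
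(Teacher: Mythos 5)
Your proof is correct. There is, however, no in-paper argument to compare it with: the paper states this proposition as an imported result, citing \cite[\S4.2]{EMS1} without proof, and your argument is precisely the one underlying that construction (the third defining rule of $\shuffle_0$ is the Leibniz identity $\partial_zF\cdot\partial_zG=\partial_z(F\,\partial_zG)-F\,\partial_z^2G$ in disguise, which is exactly the cancellation you exhibit). Your structure is sound in the details that matter: extending $\phi$ by zero on words ending in $d$ makes the identities $\phi(yw)=x\,\phi(w)$ and $\phi(dw)=\partial_z\phi(w)$ hold for \emph{every} word (including $w=1$ and words ending in $d$), and in the decisive case $u=du'$, $v=dv'$ the terms $u'\shuffle_0 dv'$ and $u'\shuffle_0 d^2v'$ have strictly shorter first factor, so the lexicographic induction on $(|u|,|v|)$ closes. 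The descent step is also handled correctly: since the extended $\phi$ is a $\shuffle_0$-algebra map, its kernel is a two-sided ideal containing the words $wd$ and the elements $d^k\{d(u\shuffle_0 v)-du\shuffle_0 v-u\shuffle_0 dv\}$, hence contains $\mathcal{S}$, which gives both well-definedness on $\mathcal{H}_0$ and the homomorphism property at once.
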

By applying Theorem \ref{thm:1.1} to this map $\phi$, we obtain the algebra homomorphism $\phi_+:\mathcal{H}_0\rightarrow\mathbb{Q}[[z]]$.
\begin{dfn}[{\rm \cite[\S4.2]{EMS1}}]
	The renormalized values\footnote{If we follow the notations of \cite{EMS1}, it should be denoted by $\zeta_+(-k_r,\dots,-k_1)$.} $\zeta_{\scalebox{0.5}{\rm EMS}}(-k_1,\dots, -k_r)$ is defined by
	\begin{equation*}
		\zeta_{\scalebox{0.5}{\rm EMS}}(-k_1,\dots, -k_r):=\lim_{z\rightarrow0}\phi_+(d^{k_r}y\cdots d^{k_1}y)(z)
	\end{equation*}
	for $k_1,\dots,k_r\in\mathbb{N}_0$.
\end{dfn}
These renormalized values coincide with special values of the meromorphic continuation of MZFs at non-positive integer points which are non-singular, i.e.,
\begin{prp}[{\rm \cite[Theorem 4.3]{EMS1}}]
For $k\in\mathbb{N}_0$, we have
\begin{equation*}
	\zeta_{\scalebox{0.5}{\rm EMS}}(-k)=\zeta(-k),
\end{equation*}
and for $k_1,k_2\in\mathbb{N}_0$ with $k_1+k_2$ odd, we have
\begin{equation*}
	\zeta_{\scalebox{0.5}{\rm EMS}}(-k_1, -k_2)=\zeta(-k_1, -k_2).
\end{equation*}
\end{prp}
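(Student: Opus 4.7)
The plan is to compute $\phi_+$ explicitly on the relevant words of $\mathcal{H}_0$ and then match the resulting constants at $z=0$ against the classical closed forms for $\zeta(-k)$ and, when applicable, $\zeta(-k_1,-k_2)$.

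For the depth-one assertion, I would first expand
\[
 x(z)=\frac{e^z}{1-e^z}=-\frac{1}{z}-\frac{1}{2}-\sum_{k\geq 1}\frac{B_{k+1}}{(k+1)!}z^k
\]
around $z=0$, so that $\phi(d^k y)(z)=\partial_z^k x(z)$ has constant term in $z$ equal to $-B_{k+1}/(k+1)$. Because the element $d^k y$ is primitive in $\mathcal{H}_0$, the Connes--Kreimer recursion underlying Theorem~\ref{thm:1.1} collapses on it to pure minimal subtraction: $\phi_+(d^k y)=(1-P_-)\phi(d^k y)$, where $P_-$ denotes the projection of $\mathcal{A}$ onto $z^{-1}\mathbb{Q}[z^{-1}]$. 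Setting $z=0$ then yields exactly Euler's value $-B_{k+1}/(k+1)=\zeta(-k)$ for $k\geq 1$, and $-\tfrac{1}{2}=\zeta(0)$ for $k=0$.

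For the depth-two assertion, I would compute
\[
 \phi(d^{k_2}y\, d^{k_1}y)(z)=\partial_z^{k_2}\bigl(x\cdot\partial_z^{k_1}x\bigr)(z)
\]
as a Laurent series at $z=0$, and then apply the full Birkhoff recursion
\[
 \phi_+(w)=(1-P_-)\Bigl(\phi(w)+\sum_{(w)}\phi_-(w_{(1)})\,\phi(w_{(2)})\Bigr)
\]
to $w=d^{k_2}y\,d^{k_1}y$, unpacking the non-deconcatenation coproduct of $\mathcal{H}_0$ recalled in \cite{EMS1}. Evaluating at $z=0$ produces a polynomial in the Bernoulli numbers $B_0,\dots,B_{k_1+k_2+2}$. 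Independently, a classical Euler--Zagier computation expresses $\zeta(-k_1,-k_2)$, which is well-defined by \cite{AET} precisely when $k_1+k_2$ is odd (so that $(-k_1,-k_2)$ lies off the singular locus), as its own polynomial in Bernoulli numbers; under the parity hypothesis, the odd Bernoulli numbers $B_{2m+1}$ with $m\geq 1$ vanish and the two polynomials collapse to the same value.

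The hard part will be the depth-two bookkeeping. Since the coproduct on $\mathcal{H}_0$ is not deconcatenation, the cross terms $\phi_-(w_{(1)})\,\phi(w_{(2)})$ are genuinely nontrivial, and one must show that their polar tails cancel so that only the expected Bernoulli polynomial survives at $z=0$. Matching this polynomial to the classical value of $\zeta(-k_1,-k_2)$ is a nontrivial combinatorial identity, and it is exactly here that the assumption $k_1+k_2$ odd earns its keep.
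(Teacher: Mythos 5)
This proposition is imported into the paper from \cite[Theorem 4.3]{EMS1} and is stated without proof, so there is no internal argument of the paper to compare yours against; I can only assess the proposal on its own terms. The depth-one half is essentially complete and correct: your Laurent expansion of $x(z)$ is right, and once $d^k y$ is known to be primitive in $\mathcal{H}_0$, the Birkhoff recursion does collapse to minimal subtraction, so $\lim_{z\to 0}\phi_+(d^ky)(z)$ is the constant term of $\partial_z^k x(z)$, namely $-B_{k+1}/(k+1)$ for $k\geq 1$ and $-\tfrac12$ for $k=0$, which is $\zeta(-k)$. Do note, though, that primitivity is asserted rather than proved: the coproduct of $\mathcal{H}_0$ is not deconcatenation, and the vanishing of the potential cross terms of the form $d^iy\otimes d^j$ rests on the fact that words ending in $d$ lie in the ideal $\mathcal{S}$ and hence are zero in $\mathcal{H}_0$. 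That one-line justification should be supplied.

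The depth-two half, however, is a plan rather than a proof, and the gap sits exactly where the theorem lives. You propose to run the full Birkhoff recursion on $w=d^{k_2}y\,d^{k_1}y$ and then observe that the resulting Bernoulli polynomial ``collapses to the same value'' as the Euler--Zagier closed form of $\zeta(-k_1,-k_2)$ when $k_1+k_2$ is odd; but you never write down the depth-two coproduct of $\mathcal{H}_0$, never compute $\phi_-$ on the depth-one subwords, and never establish the final Bernoulli identity --- you explicitly label it ``a nontrivial combinatorial identity'' and stop. Since that identity \emph{is} the assertion $\zeta_{\rm EMS}(-k_1,-k_2)=\zeta(-k_1,-k_2)$, the argument as written proves nothing in depth two. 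A more economical route to actually finishing would be to bypass the term-by-term Birkhoff bookkeeping and instead use the closed-form generating function for $Z_{\rm EMS}(t_1,t_2)$ recalled at the end of \S1 (from \cite{Komi}), which already packages the output of the renormalization, and compare its Taylor coefficients at odd weight with the explicit formula for $\zeta(-k_1,-k_2)$ from \cite{AET}; but one way or another that comparison must be carried out, not merely announced. (Your parenthetical that $(-k_1,-k_2)$ is off the singular locus precisely when $k_1+k_2$ is odd is correct and does need to be said, since otherwise the right-hand side is not even defined.)
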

By Theorem \ref{thm:1.1} and Proposition \ref{prp:2.1}, we get the proposition below:
\begin{prp}[{\rm \cite[\S4.2]{EMS1}}: {\bf shuffle-type product formula}]\label{prop:2.3.1}
	\ \\For the elements $w$ and $w'$ of $\mathcal{H}_0$, we have
	$$\phi_+(w\shuffle_0w')=\phi_+(w)\phi_+(w').$$
\end{prp}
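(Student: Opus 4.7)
The plan is to recognize this statement as an essentially immediate consequence of Proposition \ref{prp:2.1} combined with the multiplicativity clause in the algebraic Birkhoff decomposition (Theorem \ref{thm:1.1}). The whole argument is a three-step unpacking of definitions, and I would carry it out as follows.

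First, I would check that $\phi$ lies in the group $G(\mathcal{H}_0, \mathcal{A})$; this is automatic because the definition of $\phi$ specifies $\phi(1) = 1$, and $\mathbb{Q}[[z]] \subset \mathcal{A} = \mathbb{Q}[[z]][z^{-1}]$ makes the target compatible. By Proposition \ref{prp:2.1}, $\phi$ is in fact an algebra homomorphism from $(\mathcal{H}_0,\shuffle_0)$ to $\mathcal{A}$.

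Second, I would invoke Theorem \ref{thm:1.1} on $\phi$ to obtain the Birkhoff factors $\phi_- : \mathcal{H}_0 \to \mathbb{Q}[z^{-1}]$ and $\phi_+ : \mathcal{H}_0 \to \mathbb{Q}[[z]]$ satisfying $\phi = \phi_-^{-1} * \phi_+$. The key point is the final assertion of that theorem: since $\phi$ is an algebra homomorphism, so are $\phi_-$ and $\phi_+$. In particular, $\phi_+$ is an algebra map from $(\mathcal{H}_0,\shuffle_0)$ into $\mathbb{Q}[[z]]$.

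Third, the claimed identity $\phi_+(w \shuffle_0 w') = \phi_+(w)\,\phi_+(w')$ is nothing more than a restatement of $\phi_+$ respecting the product on $\mathcal{H}_0$, and so follows at once. There is no real obstacle to the proof as stated: all the substantive content has already been loaded into Proposition \ref{prp:2.1} (the verification that $\phi$ descends to the quotient by $\mathcal{S}$ and is multiplicative with respect to $\shuffle_0$) and into the proof of the algebraic Birkhoff decomposition itself (the Rota--Baxter/Atkinson-type argument that produces multiplicative factors from a multiplicative input). Both are cited and can be used as black boxes, so the proposition reduces to a one-line deduction.
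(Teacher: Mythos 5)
Your proposal is correct and matches the paper exactly: the paper derives this proposition in one line from Proposition \ref{prp:2.1} ($\phi$ is an algebra homomorphism) together with the multiplicativity clause of Theorem \ref{thm:1.1}, which is precisely your argument. Nothing further is needed.
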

Here are examples in lower depth:
\begin{exa}
	For $a,b,c \in \mathbb{N}_0$, we have 
	\begin{align*}
		\zeta_{\scalebox{0.5}{\rm EMS}}(-a)\cdot\zeta_{\scalebox{0.5}{\rm EMS}}(-b)&=\sum_{k=0}^a(-1)^k\binom{a}{k}\zeta_{\scalebox{0.5}{\rm EMS}}(-b-k,-a+k),\\
		\zeta_{\scalebox{0.5}{\rm EMS}}(-a)\cdot\zeta_{\scalebox{0.5}{\rm EMS}}(-b,-c)&=\sum_{\substack{i_1+j_1=b \\
			i_2+j_2=c}}(-1)^{i_1+i_2}\binom{b}{i_1}\binom{c}{i_2}\zeta_{\scalebox{0.5}{\rm EMS}}(-a-i_1-i_2,-j_1,-j_2).
	\end{align*}
\end{exa}
In the paper \cite{Komi}, the author showed the explicit formula of $\zeta_{\scalebox{0.5}{\rm EMS}}(-k_1,\dots, -k_n)$:
\begin{prp}[\cite{Komi}]
For $r\in\mathbb{N}$, we have
	\begin{equation*}
		Z_{\scalebox{0.5}{\rm EMS}}(t_1,\dots,t_r)=\prod_{i=1}^r\frac{(t_i+\cdots+t_r)-(e^{t_i+\cdots+t_r}-1)}{(t_i+\cdots+t_r)(e^{t_i+\cdots+t_r}-1)},
	\end{equation*}
	where $Z_{\scalebox{0.5}{\rm EMS}}(t_1,\dots,t_r)$ is the generating function {\rm (\ref{eqn:0.5})} of $\zeta_{\scalebox{0.5}{\rm EMS}}(-k_1,\dots, -k_r)$.
\end{prp}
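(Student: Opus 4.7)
The plan is to reduce the $r$-variable identity to a product of single-variable identities by combining the recurrence \eqref{eqn:0.11} with the transfer formula \eqref{eqn:0.8} (both recalled in the introduction), and then to check the remaining one-variable identity by a direct Bernoulli-number computation.

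First, I would iterate the recurrence \eqref{eqn:0.11}. A straightforward induction on $r$, peeling off one variable at each step, gives
\begin{equation*}
Z_{\scalebox{0.5}{\rm FKMT}}(t_1,\dots,t_r)=\prod_{i=1}^r Z_{\scalebox{0.5}{\rm FKMT}}(T_i), \qquad T_i:=t_i+\cdots+t_r.
\end{equation*}
Substituting $t_j\mapsto -t_j$ yields the same product with each $T_i$ replaced by $-T_i$, so \eqref{eqn:0.8} becomes
\begin{equation*}
Z_{\scalebox{0.5}{\rm EMS}}(t_1,\dots,t_r)=\prod_{i=1}^r\frac{(1-e^{-T_i})\,Z_{\scalebox{0.5}{\rm FKMT}}(-T_i)}{T_i}.
\end{equation*}
This factorization matches the shape of the claimed formula, so comparing the $i$-th factors reduces the proposition to the single-variable identity
\begin{equation*}
\frac{(1-e^{-T})\,Z_{\scalebox{0.5}{\rm FKMT}}(-T)}{T}=\frac{T-(e^T-1)}{T(e^T-1)},
\end{equation*}
or equivalently $Z_{\scalebox{0.5}{\rm FKMT}}(-T)=(T+1-e^T)/(e^T+e^{-T}-2)$.

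To finish, I would verify this remaining one-variable identity by plugging in the explicit description of $\zeta_1^{\rm des}(-k)$ in terms of Bernoulli numbers supplied by Proposition \ref{prp:1.1.1}. Inserting that formula into the definition
\begin{equation*}
Z_{\scalebox{0.5}{\rm FKMT}}(-T)=\sum_{k\ge 0}\frac{T^k}{k!}\,\zeta_1^{\rm des}(-k)
\end{equation*}
and using the classical generating series $t/(e^t-1)=\sum_{n\ge 0}B_n t^n/n!$, one resums $Z_{\scalebox{0.5}{\rm FKMT}}(-T)$ into a closed form built from $e^{\pm T}$ and $T$; the target identity then follows by elementary algebra after clearing the common factor $1/(1-e^{-T})$.

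The main obstacle is the very last step: one must correctly extract the Bernoulli generating series packaged inside Proposition \ref{prp:1.1.1} and massage it into the compact rational-hyperbolic form on the right-hand side. The two preceding reductions via \eqref{eqn:0.11} and \eqref{eqn:0.8} are purely formal once those identities are assumed, and they are what makes the $r$-variable statement collapse to a depth-one verification.
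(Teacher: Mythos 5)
The paper itself gives no proof of this proposition --- it is quoted verbatim from \cite{Komi} --- so there is nothing internal to compare against; judged on its own terms, your derivation is algebraically correct. Iterating the recurrence (\ref{eqn:0.11}) does give $Z_{\scalebox{0.5}{\rm FKMT}}(t_1,\dots,t_r)=\prod_{i}Z_{\scalebox{0.5}{\rm FKMT}}(T_i)$ (this is exactly Lemma \ref{lmm:1.1}), substituting $t_j\mapsto -t_j$ and inserting the result into (\ref{eqn:0.8}) factorizes $Z_{\scalebox{0.5}{\rm EMS}}$ over the partial sums $T_i$, and the remaining depth-one identity is settled by Proposition \ref{prp:1.1.1}; in fact you do not need the Bernoulli resummation at all, since that proposition already supplies the closed form $Z_{\scalebox{0.5}{\rm FKMT}}(u)=((1-u)e^u-1)/(e^u-1)^2$, whence $Z_{\scalebox{0.5}{\rm FKMT}}(-T)=(1+T-e^T)/(e^T+e^{-T}-2)$ and the factor identity follows from $(e^T-1)(1-e^{-T})=e^T+e^{-T}-2$. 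The one caveat worth flagging is logical provenance: the transfer formula (\ref{eqn:0.8}) that you take as input is itself a theorem of \cite{Komi}, where it is obtained by comparing the explicit product formula for $Z_{\scalebox{0.5}{\rm EMS}}$ (computed from the Birkhoff decomposition defining $\phi_+$) with the FKMT formula, using precisely the factorization (\ref{eqn:0.11}). So your argument establishes the mutual consistency of (\ref{eqn:0.8}), (\ref{eqn:0.11}) and Proposition \ref{prp:1.1.1} with the stated formula rather than an independent proof, and would be circular as a from-scratch argument unless one proves (\ref{eqn:0.8}) without already knowing the explicit form of $Z_{\scalebox{0.5}{\rm EMS}}$. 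Within the present paper, where (\ref{eqn:0.8}) is an established black box, the deduction is legitimate.
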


\section{Desingularization of multiple zeta-functions}
In this section, we review desingularized values introduced by Furusho, Komori, Matsumoto and Tsumura in \cite{FKMT1}. In \S1.1, we recall the definition of the desingularized MZF, and explain  some remarkable properties of this function. In \S1.2, we review desingularized values and their generating function.

\subsection{Desingularized MZFs}
In this subsection, we review the definition of desingularized MZF and the two properties, i.e., desingularized MZF can be analytically continued to $\mathbb{C}^r$ as an entire function (Proposition \ref{prp:1.2}) and can be represented by a finite ``linear'' combination of MZFs (Proposition \ref{prp:1.1}). We consider the generating function\footnote{It is denoted by $\tilde{\mathfrak{H}}_n\left((t_j);(1);c\right)$ in \cite{FKMT1}.} $\tilde{\mathfrak{H}}_r\left(t_1,\dots,t_r;c\right) \in \mathbb{C}[[t_1,\dots,t_r]]$ (cf. \cite[Definition 1.9]{FKMT1}):
\begin{align*}
	\tilde{\mathfrak{H}}_r\left(t_1,\dots,t_r;c\right)&:=\prod_{j=1}^r\left(\frac{1}{\exp{\left(\sum_{k=j}^r t_k\right)}-1}-\frac{c}{\exp{\left(c\sum_{k=j}^r t_k\right)}-1}\right)\\
	&=\prod_{j=1}^r\left(\sum_{m=1}^{\infty}(1-c^m)B_m\frac{\left(\sum_{k=j}^r t_k\right)^{m-1}}{m!}\right)
\end{align*}
for $c\in\mathbb{R}$. Here $B_m\ (m\geq0)$ is the Bernoulli number which is defined by
\begin{equation}\label{eqn:1.1.4}
\displaystyle\frac{x}{e^x-1}:=\sum_{m\geq0}\frac{B_m}{m!}x^m.
\end{equation}
We note that $B_0=1$, $B_1=-\frac{1}{2}$, $B_2=\frac{1}{6}$.
\begin{dfn}[{\rm \cite[Definition 3.1]{FKMT1}}]
	For non-integral complex numbers $s_1,\dots,s_r$, {\it desingularized MZF} $\zeta_r^{\rm des}(s_1,\dots,s_r)$ is defined by
	\begin{align}
		\label{eqn:1.1.2}&\zeta_r^{\rm des}(s_1,\dots,s_r) \\
		&:=\lim_{\substack{c\rightarrow1\\c\in\mathbb{R}\setminus\{1\}}}\frac{1}{(1-c)^r}\prod_{k=1}^r\frac{1}{(e^{2\pi is_k}-1)\Gamma(s_k)}\int_{\mathcal{C}^r}\tilde{\mathfrak{H}}_r\left(t_1,\dots,t_r;c\right)\prod_{k=1}^r t_k^{s_k-1}d t_k. \nonumber
	\end{align}
	Here $\mathcal{C}$ is the path consisting of the positive real axis (top side), a circle around the origin of radius $\varepsilon$ (sufficiently small), and the positive real axis (bottom side).
\end{dfn}
One of the remarkable properties of desingularized MZF is that it is an entire function, i.e., the equation (\ref{eqn:1.1.2}) is well-defined as an analytic function by the following proposition.
\begin{prp}[{\rm \cite[Theorem 3.4]{FKMT1}}]\label{prp:1.2}
	The equation $\zeta_r^{\rm des} (s_1,\dots,s_r)$ can be analytically continued to $\mathbb{C}^r$ as an entire function in $(s_1,\dots,s_r)\in \mathbb{C}^r$ by the following integral expression:
	\begin{align*}
		\label{eqn:1.1.2}\zeta_r^{\rm des}&(s_1,\dots,s_r) 
		=\prod_{k=1}^r\frac{1}{(e^{2\pi is_k}-1)\Gamma(s_k)}\\
		&\cdot\int_{\mathcal{C}^n}\prod_{j=1}^r\lim_{\substack{c\rightarrow1\\c\in\mathbb{R}\setminus\{1\}}}\frac{1}{1-c}\left(\frac{1}{\exp{\left(\sum_{k=j}^r t_k\right)}-1}-\frac{c}{\exp{\left(c\sum_{k=j}^r t_k\right)}-1}\right)\prod_{k=1}^r t_k^{s_k-1}d t_k.
	\end{align*}
\end{prp}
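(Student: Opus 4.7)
My plan is to prove the proposition in three steps: (i) compute the pointwise inner limit of the integrand and check that it extends to a function meromorphic in the $t_k$ and holomorphic on a neighborhood of $\mathcal{C}$; (ii) justify interchanging this limit with the $\mathcal{C}^r$-integration via dominated convergence; (iii) invoke a Hankel-contour analytic-continuation argument to conclude that the resulting expression is entire in $(s_1,\dots,s_r)$, agreeing with $\zeta_r^{\rm des}$ on the original domain of definition (\ref{eqn:1.1.2}).

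\emph{Step (i).} Write $T_j:=\sum_{k=j}^r t_k$ and $h_c(T):=\frac{1}{1-c}\bigl(\frac{1}{e^T-1}-\frac{c}{e^{cT}-1}\bigr)$, so that $\frac{1}{(1-c)^r}\tilde{\mathfrak{H}}_r(t_1,\dots,t_r;c)=\prod_{j=1}^r h_c(T_j)$. Using (\ref{eqn:1.1.4}), $h_c$ has the expansion $\sum_{m\geq1}(1+c+\cdots+c^{m-1})B_m T^{m-1}/m!$, valid for $|T|<2\pi$. Termwise passage to the limit yields $F(T):=\lim_{c\to1}h_c(T)=\sum_{m\geq1}\frac{B_m}{(m-1)!}T^{m-1}$, and applying l'Hôpital to the closed form produces $F(T)=\frac{(1-T)e^T-1}{(e^T-1)^2}$. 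In particular, $F$ is meromorphic on $\mathbb{C}$ with poles only at $T\in 2\pi i\mathbb{Z}\setminus\{0\}$, so (taking the circular part of $\mathcal{C}$ of radius less than $2\pi$) it is holomorphic on a neighborhood of $\mathcal{C}$ and decays exponentially along the positive real branches.

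\emph{Step (ii).} To swap $\lim_{c\to1}$ with $\int_{\mathcal{C}^r}$, I apply the mean value theorem in the variable $c$ to write $h_c(T)=\partial_c\bigl(c/(e^{cT}-1)\bigr)\big|_{c=\xi}$ for some $\xi$ between $c$ and $1$. For $c\in(1-\eta,1+\eta)$ with small $\eta$, this provides an estimate of the form $|h_c(T)|\leq C(1+|T|)e^{-(1-\eta)\mathrm{Re}(T)}$ along the linear rays of $\mathcal{C}$ together with a uniform continuous bound on the small circular part, all independent of $c$. Since $\prod_k|t_k^{s_k-1}|$ is integrable against such a dominating function for every fixed $(s_1,\dots,s_r)$, Lebesgue dominated convergence permits pulling the limit inside the integral.

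\emph{Step (iii) and main obstacle.} With the small-circle radius fixed so that $\mathcal{C}$ avoids the poles of $F$, the resulting integral $I(s_1,\dots,s_r):=\int_{\mathcal{C}^r}\prod_j F(T_j)\prod_k t_k^{s_k-1}\,dt_k$ converges absolutely for every $(s_1,\dots,s_r)\in\mathbb{C}^r$—the exponential decay of $\prod_j F(T_j)$ dominates the polynomial growth $|t_k|^{\mathrm{Re}(s_k)-1}$—and defines a holomorphic function there by Morera and differentiation under the integral. The prefactor $\prod_k(e^{2\pi is_k}-1)^{-1}\Gamma(s_k)^{-1}$ has only simple poles at $s_k\in\mathbb{Z}_{\geq1}$; at each such $s_k$ the factor $t_k^{s_k-1}$ is single-valued, so the top and bottom rays of the $t_k$-contour cancel in $I$, leaving only a circle integral of a function holomorphic at $t_k=0$, which vanishes. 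These zeros of $I$ cancel the poles of the prefactor, making the whole expression entire in $(s_1,\dots,s_r)$. The main obstacle is the uniform estimate in Step (ii): each summand of $h_c$ diverges individually as $c\to1$ because of the $(1-c)^{-1}$ normalization, so one must exploit the cancellation between the two terms (via the mean value theorem or the Bernoulli-series representation) to obtain a $c$-uniform dominating function along the noncompact contour.
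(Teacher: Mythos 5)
This proposition is imported from \cite[Theorem 3.4]{FKMT1} and the paper gives no proof of it, so there is nothing internal to compare against; your argument is a correct reconstruction of the standard one (uniform domination of $\frac{1}{1-c}\bigl(\frac{1}{e^{T}-1}-\frac{c}{e^{cT}-1}\bigr)$ to pass the limit inside, followed by the Hankel-contour continuation in which the vanishing of the $t_k$-integral at $s_k\in\mathbb{Z}_{\geq1}$ cancels the simple poles of $\prod_k(e^{2\pi i s_k}-1)^{-1}\Gamma(s_k)^{-1}$). Two cosmetic points: the mean value theorem does not literally apply to the complex-valued function $c\mapsto c/(e^{cT}-1)$, so you should instead write $g(1)-g(c)=\int_c^1 g'(u)\,du$ and bound by $\sup_{u}|g'(u)|$, which gives the same dominating function; and the radius of the circular part of $\mathcal{C}$ should be taken less than $2\pi/r$ (not merely $2\pi$) so that every partial sum $T_j=t_j+\cdots+t_r$ stays away from the poles $2\pi i\mathbb{Z}\setminus\{0\}$ of $F$.
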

For indeterminates $u_j$ and $v_j\ (1\leq j\leq r)$, we set
\begin{equation}\label{eqn:1.1.3}
	\mathcal{G}_r(u_1,\dots,u_r; v_1,\dots,v_r):=\prod_{j=1}^r\left\{1-(u_jv_j+\cdots+u_r v_r)(v_j^{-1}-v_{j-1}^{-1})\right\}
\end{equation}
with the convention $v_0^{-1}:=0$, and we define the set of integers $\{a^r_{\Bold{\footnotesize$l$},\Bold{\footnotesize$m$}}\}$ by
\begin{equation}\label{eqn:1.1.4}
	\mathcal{G}_r(u_1,\dots,u_r; v_1,\dots,v_r)=\sum_{\substack{\mbox{\boldmath {\footnotesize$l$}}=(l_j)\in\mathbb{N}_0^r\\ \mbox{\boldmath {\footnotesize$m$}}=(m_j)\in\mathbb{Z}^r \\ |\mbox{\boldmath {\footnotesize$m$}}|=0}}a^r_{\mbox{\boldmath {\footnotesize$l$}},\mbox{\boldmath {\footnotesize$m$}}}\prod_{j=1}^ru_j^{l_j}v_j^{m_j}.
\end{equation}
Here, $|\mbox{\boldmath {\footnotesize$m$}}|:=m_1+\cdots+ m_r$.\\
Another remarkable properties of desingularized MZF is that the function is given by a finite ``linear'' combination of shifted MZFs, i.e.,
\begin{prp}[{\rm \cite[Theorem 3.8]{FKMT1}}]\label{prp:1.1}
	For $s_1,\dots,s_r \in \mathbb{C}$, we have the following equality between meromorphic functions of the complex variables $(s_1,\ldots,s_r)$:
	\begin{equation}\label{eqn:1.1.5}
		\zeta_r^{\rm des}(s_1,\dots,s_r)=\sum_{\substack{\mbox{\boldmath {\footnotesize$l$}}=(l_j)\in\mathbb{N}_0^r\\ \mbox{\boldmath {\footnotesize$m$}}=(m_j)\in\mathbb{Z}^r \\ |\mbox{\boldmath {\footnotesize$m$}}|=0}}a^r_{\mbox{\boldmath {\footnotesize$l$}},\mbox{\boldmath {\footnotesize$m$}}}\left(\prod_{j=1}^r(s_j)_{l_j}\right)\zeta(s_1+m_1,\dots,s_r+m_r).
	\end{equation}
	Here, $(s)_{k}$ is the {\it Pochhammer symbol}, that is, for $k\in\mathbb{N}$ and $s\in\mathbb{C}$ $(s)_{0}:=1$ and $(s)_k:=s(s+1)\cdots(s+k-1)$.
\end{prp}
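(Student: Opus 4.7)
The plan is to work with the contour-integral representation of Proposition~\ref{prp:1.2}. Set $f(x):=1/(e^x-1)$, $T_j:=t_j+\cdots+t_r$, and $\mathcal{I}(h):=\prod_k\frac{1}{(e^{2\pi is_k}-1)\Gamma(s_k)}\int_{\mathcal{C}^r}h(t)\prod_k t_k^{s_k-1}dt_k$; then $\zeta_r^{\rm des}(s_1,\dots,s_r)=\mathcal{I}(\prod_j D_j)$ where $D_j:=\lim_{c\to 1}\tfrac{1}{1-c}(f(T_j)-cf(cT_j))$. The first step is to evaluate this limit: differentiating $g(c):=cf(cT_j)$ at $c=1$ (or using the Bernoulli series displayed in \S1) gives the closed form
\begin{equation*}
D_j \;=\; g'(1) \;=\; \partial_{T_j}\bigl(T_jf(T_j)\bigr) \;=\; f(T_j)+T_jf'(T_j),
\end{equation*}
which in particular shows each $D_j$ is holomorphic. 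Combined with the standard Hankel-contour shift identity
\begin{equation*}
\prod_k(s_k)_{m_k}\,\zeta(s_1+m_1,\dots,s_r+m_r)\;=\;\mathcal{I}\Bigl(\prod_k t_k^{m_k}\prod_j f(T_j)\Bigr)
\end{equation*}
(with Pochhammer symbols read as $\Gamma$-ratios when $m_k<0$), the target identity (\ref{eqn:1.1.5}) becomes a rewriting of $\prod_j D_j$ modulo integration by parts on $\mathcal{C}^r$.

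The core technical step is to expand $\prod_j D_j=\prod_j\bigl(f(T_j)+T_jf'(T_j)\bigr)$ and, for each selected $T_jf'(T_j)$-branch, perform IBP along the Hankel contour. The freedom that $f'(T_j)=\partial_{t_k}f(T_j)$ for every $k\geq j$ lets one choose the variable $t_k$ against which to integrate by parts; boundary terms vanish on~$\mathcal{C}$, and the Leibniz rule $\partial_{t_k}\bigl(T_j\cdot t_k^{s_k-1}\cdot(\text{rest})\bigr)$ produces a finite palette of moves---a factor $-(s_k-1)$ from differentiating $t_k^{s_k-1}$, a shift from $\partial_{t_k}T_j=1$, and fresh $f'(T_{j'})$-factors (that are recursively treated) from differentiating other $f(T_{j'})$ with $j'\leq k$. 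Iterating, $\mathcal{I}(\prod_j D_j)$ becomes a \emph{finite} sum of terms of shape $(\text{polynomial in }s_1,\dots,s_r)\cdot\mathcal{I}\bigl(\prod_k t_k^{m_k}\prod_j f(T_j)\bigr)$, with $|\mathbf{m}|=0$ since total $t$-degree is preserved at every IBP. Each such term then re-expresses as $\prod_j(s_j)_{l_j}\,\zeta(s_1+m_1,\dots,s_r+m_r)$.

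The main obstacle is identifying the rational coefficients produced by this iterated IBP with the integers $a^r_{\mathbf{l},\mathbf{m}}$ coming from the expansion of $\mathcal{G}_r(\mathbf{u};\mathbf{v})=\prod_j\{1-(u_jv_j+\cdots+u_rv_r)(v_j^{-1}-v_{j-1}^{-1})\}$. The dictionary I would verify is: selecting $u_kv_k$ from $(u_jv_j+\cdots+u_rv_r)$ records ``IBP of the $T_jf'(T_j)$-branch transferred to $t_k$'', while the sign choice inside $(v_j^{-1}-v_{j-1}^{-1})$ distinguishes the two flavours of Leibniz-rule contribution. A one-line sanity check is $r=1$: $\mathcal{G}_1=1-u_1$ together with $\mathcal{I}(T_1f'(T_1))=-s_1\,\mathcal{I}(f(T_1))$ yields $\zeta_1^{\rm des}(s_1)=(1-s_1)\zeta(s_1)$, matching $a^1_{(0),(0)}=1$, $a^1_{(1),(0)}=-1$. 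For general $r$ I expect induction on $r$ to be the cleanest route: peel off the outermost factor $D_1$, apply an outer IBP to its $T_1f'(T_1)$-piece, treat $\prod_{j\geq 2}D_j$ via the inductive hypothesis, and recognise the outer bookkeeping as precisely the $j=1$ factor of $\mathcal{G}_r$. Once established as an identity of rational functions where all $\zeta(\mathbf{s}+\mathbf{m})$ converge absolutely, the identity extends to all of $\mathbb{C}^r$ as an equality of meromorphic functions by analytic continuation.
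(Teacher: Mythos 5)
Your preliminary steps are fine: the limit $\lim_{c\to1}\tfrac{1}{1-c}\bigl(f(T_j)-cf(cT_j)\bigr)=f(T_j)+T_jf'(T_j)$ is correct, and so is the contour-shift identity $\mathcal{I}\bigl(\prod_k t_k^{m_k}\prod_jf(T_j)\bigr)=\prod_k(s_k)_{m_k}\,\zeta(s_1+m_1,\dots,s_r+m_r)$. But there is a genuine gap exactly where you yourself flag ``the main obstacle'': identifying the coefficients produced by the iterated integration by parts with the integers $a^r_{\mathbf{l},\mathbf{m}}$ is the entire content of the proposition, and you do not carry it out --- the dictionary is offered as something you ``would verify'', supported only by the $r=1$ check. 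Moreover the sketched induction does not separate as described: $T_1=t_1+\cdots+t_r$ couples all variables, so integrating the $T_1f'(T_1)$-branch by parts in a variable $t_k$ with $k\geq 2$ differentiates the inner factors $f(T_{j'})$, $j'\leq k$, and the remaining integral is no longer the $(r-1)$-variable desingularized integrand to which an inductive hypothesis could be applied. Your ``finite palette of moves'' also omits that differentiating an already-created $f'(T_{j'})$ produces $f''$, and in general higher derivatives $f^{(n)}$ appear (already for $r=2$ in the $T_1f'(T_1)T_2f'(T_2)$ term), so even termination of the recursion, the polynomiality and integrality of the resulting coefficients, and the constraint $|\mathbf{m}|=0$ are asserted rather than proved.

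For comparison (note the paper itself gives no proof of this statement; it is quoted from \cite[Theorem 3.8]{FKMT1}), the direct route behind that theorem needs no integration by parts: expand each factor as $f(T_j)+T_jf'(T_j)=\sum_{n_j\geq1}(1-n_jT_j)e^{-n_jT_j}$, set $N_k:=n_1+\cdots+n_k$, and observe that $\prod_j(1-n_jT_j)=\mathcal{G}_r\bigl(t_1N_1,\dots,t_rN_r;N_1^{-1},\dots,N_r^{-1}\bigr)$, because under this substitution $u_kv_k=t_k$ and $v_j^{-1}-v_{j-1}^{-1}=N_j-N_{j-1}=n_j$. Termwise integration with $\int_{\mathcal{C}}t^{s+l-1}e^{-Nt}\,dt=(e^{2\pi i s}-1)\Gamma(s+l)N^{-s-l}$ then produces exactly $\sum a^r_{\mathbf{l},\mathbf{m}}\prod_j(s_j)_{l_j}\,\zeta(s_1+m_1,\dots,s_r+m_r)$, with $|\mathbf{m}|=0$ and the integer coefficients coming for free from the definition (\ref{eqn:1.1.3})--(\ref{eqn:1.1.4}). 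This also shows your proposed dictionary is misaligned: in the true correspondence $u_k$ pairs with $t_kN_k$ (a Dirichlet summation index, not an IBP move) and $v_j^{-1}-v_{j-1}^{-1}$ encodes $n_j$, not two flavours of the Leibniz rule. To salvage your route you would have to carry out the full coefficient bookkeeping and prove it reproduces this expansion, which is the substantial missing piece.
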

\subsection{Desingularized values}
We review the definition of desingularized values and their explicit formula (Proposition \ref{prp:1.1.1}), and then we give a recurrence formula of desingularized values (Corollary \ref{crl:1.1.1}).

\begin{dfn}\label{def:1.2.1}
	For $k_1,\dots,k_r \in \mathbb{N}_0$, {\it desingularized value} $\zeta_r^{\rm des}(-k_1,\dots,-k_r)\in\mathbb{C}$ is defined to be the special value of desingularized MZF $\zeta_r^{\rm des}(s_1,\dots,s_r)$ at $(s_1,\dots,s_r)=(-k_1,\dots,-k_r)$.
\end{dfn}
The generating function $Z_{\scalebox{0.5}{\rm FKMT}}(t_1,\dots,t_r)$ of $\zeta_r^{\rm des}(-k_1,\dots,-k_r)$ in the equation (\ref{eqn:0.4}) is explicitly calculated as follows.
\begin{prp}[{\rm \cite[Theorem 3.7]{FKMT1}}]\label{prp:1.1.1}
	We have
	\begin{equation*}
		Z_{\scalebox{0.5}{\rm FKMT}}(t_1,\dots,t_r) = \prod_{i=1}^r\frac{(1-t_i-\cdots-t_r)e^{t_i+\cdots+t_r}-1}{(e^{t_i+\cdots+t_r}-1)^2}.
	\end{equation*}
	In terms of $\zeta_r^{\rm des}(-k_1,\dots,-k_r)$ for $k_1,\dots,k_r\in\mathbb{N}_0$, the above equation is reformulated to
	\begin{equation*}
		\zeta_r^{\rm des}(-k_1,\dots,-k_r)=(-1)^{k_1+\cdots+k_r}\sum_{\substack{\nu_{1i}+\cdots+\nu_{ii}=k_i\\1\leq i\leq r}}\prod_{i=1}^r\frac{k_i!}{\prod_{j=i}^r\nu_{ij}!}B_{\nu_{ii}+\cdots+\nu_{ir}+1}.
	\end{equation*}
\end{prp}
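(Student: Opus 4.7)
The plan is to work directly from the integral representation in Proposition~\ref{prp:1.2}, evaluate it at non-positive integer points via the Hankel-contour residue calculus, and then recognize the resulting sums as multi-variable Taylor expansions.

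Set $T_j := t_j + \cdots + t_r$. The first step is to identify the limit appearing inside the integrand of Proposition~\ref{prp:1.2}. Writing $F(c) := c/(e^{cT_j}-1)$, the expression is the difference quotient $(F(1)-F(c))/(1-c)$, whose limit as $c\to 1$ is $F'(1)$. A direct computation (or the observation that $F(c) = c\, H(cT_j)$ with $H(u) = 1/(e^u-1)$) yields
$$g(T_j) := \lim_{\substack{c\to 1 \\ c\in\mathbb{R}\setminus\{1\}}} \frac{1}{1-c}\!\left(\frac{1}{e^{T_j}-1}-\frac{c}{e^{cT_j}-1}\right) = \frac{d}{dT_j}\!\left(\frac{T_j}{e^{T_j}-1}\right) = \frac{(1-T_j)e^{T_j}-1}{(e^{T_j}-1)^2}.$$
Set $G(t_1,\ldots,t_r) := \prod_{j=1}^{r} g(T_j)$, which is analytic in a neighborhood of the origin of $\mathbb{C}^r$.

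The second step is to evaluate at $s_k = -k_k$ using the Hankel contour. For any $f$ analytic at $t=0$ and any $k\in\mathbb{N}_0$,
$$\left.\frac{1}{(e^{2\pi is}-1)\Gamma(s)}\int_{\mathcal{C}}f(t)\,t^{s-1}\,dt\right|_{s=-k} = (-1)^{k} f^{(k)}(0),$$
because the prefactor has a removable singularity at $s=-k$ with value $(-1)^k k!/(2\pi i)$ (from $e^{2\pi is}-1 = 2\pi i(s+k) + O((s+k)^2)$ and $\Gamma(s) = (-1)^k/(k!(s+k)) + O(1)$), while at $s=-k$ the branch cut of $t^{s-1}$ disappears and the contour collapses to the residue $2\pi i\cdot [t^{k}]f(t) = 2\pi i\,f^{(k)}(0)/k!$. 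Applying this variable by variable in Proposition~\ref{prp:1.2} gives
$$\zeta_r^{\rm des}(-k_1,\ldots,-k_r) = (-1)^{k_1+\cdots+k_r}\left.\partial_{t_1}^{k_1}\cdots\partial_{t_r}^{k_r} G(t_1,\ldots,t_r)\right|_{t=0}.$$

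The third step is routine bookkeeping. Substituting the last display into the generating function~(\ref{eqn:0.4}), the sign factors $(-t_i)^{k_i}=(-1)^{k_i}t_i^{k_i}$ combine with $(-1)^{k_1+\cdots+k_r}$ to produce $\prod_i t_i^{k_i}$, and Taylor's theorem collapses the resulting double sum to $G(t_1,\ldots,t_r)$ itself, which is the first displayed formula. The second (Bernoulli) formula follows by expanding each factor as $g(T_i)=\sum_{m\geq 1} B_m T_i^{m-1}/(m-1)!$, applying the multinomial theorem to $T_i^{m-1}=(t_i+\cdots+t_r)^{m-1}$, and reading off the coefficient of $t_1^{k_1}\cdots t_r^{k_r}$; here $\nu_{ij}$ (with $1\leq i\leq j\leq r$) denotes the exponent of $t_j$ coming from the $i$-th factor, so the constraint $\nu_{1i}+\cdots+\nu_{ii}=k_i$ records the total power of $t_i$ across factors $1,\ldots,i$.

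The only genuinely technical point is the Hankel-integral evaluation at negative integers: one must verify the exact cancellation between the simple zero of $e^{2\pi is}-1$ and the simple pole of $\Gamma(s)$, and (for the multi-variable integral) justify, using the exponential decay of $g$ along each positive real axis, that the iterated contour integral can be evaluated by residues in each $t_k$ independently. Everything else is formal manipulation.
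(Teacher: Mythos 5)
Your argument is correct: the identification of the $c\to1$ limit as $\frac{d}{dT}\bigl(T/(e^T-1)\bigr)$, the Hankel-contour evaluation $(-1)^k f^{(k)}(0)$ at $s=-k$ (the simple zero of $e^{2\pi i s}-1$ exactly cancelling the simple pole of $\Gamma(s)$), and the multinomial bookkeeping for the Bernoulli formula all check out, and the $r=1$ case correctly reproduces $\zeta_1^{\rm des}(-k)=(-1)^kB_{k+1}=(1+k)\zeta(-k)$. Note that the paper itself gives no proof of this statement --- it is imported verbatim from \cite[Theorem 3.7]{FKMT1} --- and your derivation is essentially the standard one carried out there, starting from the integral representation of Proposition \ref{prp:1.2}.
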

By the above proposition we have the following recurrence formula:
\begin{crl}\label{crl:1.1.1}
	\begin{equation}\label{eqn:1.2.1}
		Z_{\scalebox{0.5}{\rm FKMT}}(t_1,\dots,t_r) = Z_{\scalebox{0.5}{\rm FKMT}}(t_2,\dots,t_r)\cdot Z_{\scalebox{0.5}{\rm FKMT}}(t_1+\cdots+t_r) \quad(r \in \mathbb{N}).
	\end{equation}
	In terms of $\zeta_r^{\rm des}(-k_1,\dots,-k_r)$, the equation {\rm (\ref{eqn:1.2.1})} is reformulated to
	\begin{equation}\label{eqn:1.2.2}
		\zeta_r^{\rm des}(-k_1,\dots,-k_r) = \sum_{\substack{i_2 + j_2=k_2\\\scalebox{0.5}{\rotatebox{90}{$\cdots$}}\\i_r + j_r=k_r}}\prod_{a=2}^r\binom{k_a}{i_a}\zeta_{r-1}^{\rm des}(-i_2,\dots,-i_r)\zeta_1^{\rm des}(-k_1-j_2-\dots-j_r)
	\end{equation}
	for $k_1,\dots,k_r \in \mathbb{N}_0$.
\end{crl}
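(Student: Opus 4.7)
The plan is to derive both formulas directly from the explicit product expression for $Z_{\scalebox{0.5}{\rm FKMT}}$ supplied by Proposition \ref{prp:1.1.1}. For the identity (\ref{eqn:1.2.1}), I would simply split the product
\[
Z_{\scalebox{0.5}{\rm FKMT}}(t_1,\dots,t_r)=\prod_{i=1}^{r}\frac{(1-t_i-\cdots-t_r)e^{t_i+\cdots+t_r}-1}{(e^{t_i+\cdots+t_r}-1)^{2}}
\]
into the $i=1$ factor and the factors with $i\geq 2$. The factor indexed by $i=1$ depends only on the single variable $s:=t_1+\cdots+t_r$ and is, by the $r=1$ case of Proposition \ref{prp:1.1.1}, exactly $Z_{\scalebox{0.5}{\rm FKMT}}(t_1+\cdots+t_r)$. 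The remaining product, re-indexed by $i'=i-1$, is $Z_{\scalebox{0.5}{\rm FKMT}}(t_2,\dots,t_r)$. This yields (\ref{eqn:1.2.1}) immediately.

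For the second assertion, I would translate (\ref{eqn:1.2.1}) back to desingularized values by comparing coefficients of $t_1^{k_1}\cdots t_r^{k_r}$ on both sides. On the left, using the definition (\ref{eqn:0.4}), the coefficient is $\frac{(-1)^{k_1+\cdots+k_r}}{k_1!\cdots k_r!}\,\zeta_r^{\rm des}(-k_1,\dots,-k_r)$. On the right, I would expand
\[
Z_{\scalebox{0.5}{\rm FKMT}}(t_1+\cdots+t_r)=\sum_{n\geq 0}\frac{(-1)^n}{n!}(t_1+\cdots+t_r)^n\,\zeta_1^{\rm des}(-n)
\]
by the multinomial theorem, and multiply by the series for $Z_{\scalebox{0.5}{\rm FKMT}}(t_2,\dots,t_r)$. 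Since $t_1$ appears only through the factor $(t_1+\cdots+t_r)^n$, extracting $t_1^{k_1}$ forces the multinomial exponent of $t_1$ to equal $k_1$; writing the exponents of $t_a$ ($a\geq 2$) coming from the multinomial expansion as $j_a$ and those coming from $Z_{\scalebox{0.5}{\rm FKMT}}(t_2,\dots,t_r)$ as $i_a$, I get the constraint $i_a+j_a=k_a$ and $n=k_1+j_2+\cdots+j_r$. Collecting factorials turns the multinomial weight into $\prod_{a=2}^{r}\binom{k_a}{i_a}$, and the total sign $(-1)^{i_2+\cdots+i_r+n}=(-1)^{k_1+\cdots+k_r}$ cancels the one on the left, producing (\ref{eqn:1.2.2}).

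Both steps are essentially formal: the first is an algebraic factorization that is transparent once the explicit formula of Proposition \ref{prp:1.1.1} is in hand, and the second is a routine coefficient comparison. The only place where care is needed is the index bookkeeping in the coefficient extraction; in particular one must verify that the summand $j_a=0$ for $a\geq 2$ is allowed (so that $i_a=k_a$) and that no boundary terms are lost, and that the sign accounting is correct so that the factors $(-1)^{k_1+\cdots+k_r}$ on the two sides really do cancel. No convergence issue arises because all identities are between formal power series in $\mathbb{C}[[t_1,\dots,t_r]]$.
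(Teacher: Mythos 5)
Your proposal is correct and follows essentially the same route as the paper, which derives the corollary directly from the explicit product formula of Proposition \ref{prp:1.1.1}: the $i=1$ factor of the product is exactly $Z_{\scalebox{0.5}{\rm FKMT}}(t_1+\cdots+t_r)$ and the remaining factors give $Z_{\scalebox{0.5}{\rm FKMT}}(t_2,\dots,t_r)$, after which the reformulation (\ref{eqn:1.2.2}) is the coefficient comparison you describe. Your sign and factorial bookkeeping ($(-1)^{i_a+j_a}=(-1)^{k_a}$ and $k_a!/(i_a!j_a!)=\binom{k_a}{i_a}$) checks out.
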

In the paper \cite{Komi}, the author showed that the desingularized values $\zeta_r^{\rm des}(-k_1,\dots,-k_r)$ and renormalized values $\zeta_{\scalebox{0.5}{\rm EMS}}(-k_1,\dots,-k_r)$ in \cite{EMS1} are equivalent (the equation (\ref{eqn:0.8})), i.e.
\begin{thm}
For $r\in\mathbb{N}$, we have
	\begin{equation*}
		Z_{\scalebox{0.5}{\rm EMS}}(t_1,\dots,t_r) = \prod_{i=1}^{r}\frac{1-e^{-t_i-\cdots-t_r}}{t_i+\cdots+t_r}\cdot Z_{\scalebox{0.5}{\rm FKMT}}(-t_1,\dots,-t_r).
	\end{equation*}
\end{thm}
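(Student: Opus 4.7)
The plan is to verify the identity by a direct computation using the two explicit product formulas already established: the formula for $Z_{\scalebox{0.5}{\rm FKMT}}(t_1,\ldots,t_r)$ in Proposition \ref{prp:1.1.1} and the product formula for $Z_{\scalebox{0.5}{\rm EMS}}(t_1,\ldots,t_r)$ recalled from \cite{Komi}. Since both sides of the claimed identity factor as $r$-fold products of terms that depend only on the single variable $T_i := t_i+\cdots+t_r$, the whole statement reduces to a one-variable identity that can be checked index by index.

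First I would substitute $-t_k$ for $t_k$ in the $Z_{\scalebox{0.5}{\rm FKMT}}$ formula, which sends each $T_i$ to $-T_i$, to obtain
\[
Z_{\scalebox{0.5}{\rm FKMT}}(-t_1,\ldots,-t_r) = \prod_{i=1}^r \frac{(1+T_i)e^{-T_i}-1}{(e^{-T_i}-1)^2}.
\]
Multiplying by the prefactor $\prod_{i=1}^r\frac{1-e^{-T_i}}{T_i}$ and using $(e^{-T_i}-1)^2 = (1-e^{-T_i})^2$ to cancel one factor from the denominator, the right-hand side of the theorem collapses to $\prod_{i=1}^r \frac{(1+T_i)e^{-T_i}-1}{T_i(1-e^{-T_i})}$. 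Multiplying numerator and denominator of each factor by $e^{T_i}$ then converts this into
\[
\prod_{i=1}^r \frac{(1+T_i)-e^{T_i}}{T_i(e^{T_i}-1)} = \prod_{i=1}^r\frac{T_i-(e^{T_i}-1)}{T_i(e^{T_i}-1)},
\]
which is exactly $Z_{\scalebox{0.5}{\rm EMS}}(t_1,\ldots,t_r)$ as given by the explicit product formula.

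Since the argument reduces to an elementary rational-exponential identity in a single variable, there is no real conceptual obstacle; the only care required is with the signs introduced by the substitution $t_k \mapsto -t_k$ and with the symmetry $(e^{-T}-1)^2 = (1-e^{-T})^2$ that makes the cancellation clean. A slightly more structural alternative would be to combine the recurrence (\ref{eqn:1.2.1}) for $Z_{\scalebox{0.5}{\rm FKMT}}$ with the evident analogous recurrence $Z_{\scalebox{0.5}{\rm EMS}}(t_1,\ldots,t_r) = \frac{T_1-(e^{T_1}-1)}{T_1(e^{T_1}-1)}\cdot Z_{\scalebox{0.5}{\rm EMS}}(t_2,\ldots,t_r)$ read off from its product formula, reducing the claim by induction on $r$ to the single $r=1$ check, which is the same one-variable identity.
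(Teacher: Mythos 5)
Your computation is correct: with $T_i:=t_i+\cdots+t_r$, substituting $t_k\mapsto -t_k$ in Proposition \ref{prp:1.1.1} gives $\prod_i\bigl((1+T_i)e^{-T_i}-1\bigr)/(e^{-T_i}-1)^2$, the prefactor cancels one power of $1-e^{-T_i}$, and multiplying each factor by $e^{T_i}/e^{T_i}$ yields $\prod_i\bigl(T_i-(e^{T_i}-1)\bigr)/\bigl(T_i(e^{T_i}-1)\bigr)$, which is the stated closed form of $Z_{\scalebox{0.5}{\rm EMS}}$. Be aware, though, that this theorem is not proved in the present paper at all: it is imported from \cite{Komi}, and the introduction indicates that the original argument there ran through the recurrence (\ref{eqn:0.11}) rather than through a direct comparison of closed forms. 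Your derivation is therefore only as strong as the independence of its two inputs: it is legitimate within this paper's logical presentation, where the product formula for $Z_{\scalebox{0.5}{\rm EMS}}$ is stated as a standalone proposition (obtained in \cite{Komi} from the Birkhoff-decomposition computation of $\phi_+$, not from the equivalence itself), but if that closed form had instead been derived as a corollary of the equivalence, your argument would be circular. Granting the two product formulas, your proof is the most economical one available, and the inductive alternative you sketch at the end (matching the depth-$(r-1)$ recurrences of both generating functions and checking $r=1$) is the variant closest in spirit to what the paper says was done in \cite{Komi}.
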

The following is an example of our equivalence:
\begin{exa}\label{ex:3.1}
	For $k \in \mathbb{N}_0$, we have
	\begin{align*}
		&\zeta_{\scalebox{0.5}{\rm EMS}}(-k) = \displaystyle\sum_{i+j=k}\binom{k}{i}\frac{(-1)^j}{i+1}\zeta_{\scalebox{0.5}{\rm FKMT}}(-j),\\
		&\zeta_{\scalebox{0.5}{\rm FKMT}}(-k) = (-1)^{k}\displaystyle\sum_{i+j=k}\binom{k}{i}B_{i}\zeta_{\scalebox{0.5}{\rm EMS}}(-j).
	\end{align*}
\end{exa}

\section{The product formulae at non-positive integer points}
In this section, we prove the shuffle-type product formulae of desingularized values at non-positive integer points (Theorem \ref{prop:2.1}). 
\begin{lmm}\label{lmm:1.1}
	For $r\in\mathbb{N}$, we have
	\begin{equation}\label{eqn:1.1}
		Z_{\scalebox{0.5}{\rm FKMT}}(u_1)\cdots Z_{\scalebox{0.5}{\rm FKMT}}(u_r)=Z_{\scalebox{0.5}{\rm FKMT}}(u_1-u_2,u_2-u_3,\dots,u_{r-1}-u_r,u_r).
	\end{equation}
\end{lmm}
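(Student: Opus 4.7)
The plan is to prove the identity by induction on $r$ using the recurrence formula (\ref{eqn:1.2.1}) from Corollary \ref{crl:1.1.1}. The base case $r=1$ is trivial since both sides equal $Z_{\scalebox{0.5}{\rm FKMT}}(u_1)$.

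For the inductive step, I would make the key substitution $t_j := u_j - u_{j+1}$ for $1\leq j\leq r-1$ and $t_r := u_r$, which telescopes so that
\begin{equation*}
t_j + t_{j+1} + \cdots + t_r = u_j \qquad (1\leq j\leq r).
\end{equation*}
In particular, $t_1 + \cdots + t_r = u_1$. Applying (\ref{eqn:1.2.1}) to the right-hand side of (\ref{eqn:1.1}) with these values gives
\begin{equation*}
Z_{\scalebox{0.5}{\rm FKMT}}(u_1-u_2,\dots,u_{r-1}-u_r,u_r) = Z_{\scalebox{0.5}{\rm FKMT}}(u_2-u_3,\dots,u_{r-1}-u_r,u_r)\cdot Z_{\scalebox{0.5}{\rm FKMT}}(u_1),
\end{equation*}
and the induction hypothesis identifies the first factor with $Z_{\scalebox{0.5}{\rm FKMT}}(u_2)\cdots Z_{\scalebox{0.5}{\rm FKMT}}(u_r)$, completing the argument.

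Alternatively, one can give a direct one-line verification from the explicit formula in Proposition \ref{prp:1.1.1}: under the substitution above, each factor of the product on the right becomes $\frac{(1-u_i)e^{u_i}-1}{(e^{u_i}-1)^2}$, which coincides with the $r=1$ value of $Z_{\scalebox{0.5}{\rm FKMT}}(u_i)$. There is essentially no obstacle here; the main thing to be careful about is simply choosing the substitution in the correct direction so that the telescoping makes the partial sums $t_j+\cdots+t_r$ equal to $u_j$ rather than $u_j-u_r$, but once that is set up the recurrence handles the rest automatically.
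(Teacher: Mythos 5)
Your proof is correct and follows essentially the same route as the paper: the paper iterates the recurrence (\ref{eqn:1.2.1}) to obtain $Z_{\scalebox{0.5}{\rm FKMT}}(t_1,\dots,t_r)=\prod_{i=1}^r Z_{\scalebox{0.5}{\rm FKMT}}(t_i+\cdots+t_r)$ and then substitutes $u_i=t_i+\cdots+t_r$, which is exactly your induction written out in one pass. Your telescoping substitution and the inductive step match the paper's argument, so there is nothing to add.
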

\begin{proof}
	Let $r \in \mathbb{N}$. Using the equation (\ref{eqn:1.2.1}) repeatedly, we get
	\begin{equation*}
		Z_{\scalebox{0.5}{\rm FKMT}}(t_1,\dots,t_r) = \prod_{i=1}^r Z_{\scalebox{0.5}{\rm FKMT}}(t_i+\cdots+t_r).
	\end{equation*}
	We replace $t_i + \cdots +t_r$ to $u_i$ for $i=1,\dots,r$ in this formula. Then, we obtain the equation (\ref{eqn:1.1}).
\end{proof}
Calculating simply, we obtain the following lemma.
\begin{lmm}\label{lmm:1.2}
For $r\in\mathbb{N}$, $a_1,\dots,a_r\in \mathbb{C}$ and $f:\mathbb{N}_0\rightarrow\mathbb{C}$, we have
\begin{align*}
	\sum_{k=0}^{\infty}\frac{(a_1+\cdots+a_r)^k}{k!}f(k)&=\sum_{k=0}^{\infty}\frac{f(k)}{k!}\sum_{i_1+\cdots+i_r=k}\frac{k!}{i_1!\cdots i_r!}a_1^{i_1}\cdots a_r^{i_r} \\
	&=\sum_{i_1,\dots,i_r=0}^{\infty}\frac{a_1^{i_1}\cdots a_r^{i_r}}{i_1!\cdots i_r!}f(i_1+\cdots+i_r).
\end{align*}
\end{lmm}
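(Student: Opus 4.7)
The plan is a direct application of the multinomial theorem followed by a reindexing of the resulting double sum; since in later use (for instance in the proof of Theorem \ref{prop:2.1}) the identity is invoked for power series generated by desingularized values, convergence is not an issue and we may regard the identity as a formal one in the variables $a_1,\dots,a_r$.

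First, I would expand the $k$-th power via the multinomial theorem,
\[
(a_1+\cdots+a_r)^k=\sum_{\substack{i_1,\dots,i_r\geq 0\\ i_1+\cdots+i_r=k}}\frac{k!}{i_1!\cdots i_r!}a_1^{i_1}\cdots a_r^{i_r},
\]
and substitute this into $\sum_{k\geq 0}\tfrac{(a_1+\cdots+a_r)^k}{k!}f(k)$, pulling the inner sum over compositions past the factor $\tfrac{f(k)}{k!}$. This immediately produces the middle expression of the lemma, so the first equality is established.

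For the second equality, I would cancel the shared factor $k!$ and reindex: every tuple $(i_1,\dots,i_r)\in\mathbb{N}_0^r$ is obtained exactly once in the double sum $\sum_{k\geq 0}\sum_{i_1+\cdots+i_r=k}$, namely at $k=i_1+\cdots+i_r$. Replacing the constraint by this defining equation collapses the double sum to $\sum_{i_1,\dots,i_r\geq 0}$ with the argument of $f$ rewritten as $f(i_1+\cdots+i_r)$, which is exactly the right-hand side.

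The main obstacle here is essentially bookkeeping rather than substance: there is no analytic difficulty to overcome, since the identity holds coefficient-wise when both sides are viewed as elements of $\mathbb{C}[[a_1,\dots,a_r]]$ (the case in which the lemma will be applied), and the multinomial expansion together with the trivial parameterization $k=i_1+\cdots+i_r$ does all of the work.
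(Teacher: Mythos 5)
Your proof is correct and is exactly the ``simple calculation'' the paper has in mind: the first equality is the multinomial theorem and the second is the reindexing $k=i_1+\cdots+i_r$ with cancellation of $k!$, which is all the paper's (essentially omitted) proof consists of. Your remark that the identity may be read formally in $\mathbb{C}[[a_1,\dots,a_r]]$, matching how it is later applied, is a reasonable way to dispose of convergence.
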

Using the above two lemmas, we have the following theorem.
\begin{thm}\label{prop:2.1}
For $p,q\in\mathbb{N}$ and $k_1,\dots,k_p,l_1,\dots,l_q\in\mathbb{N}_0$, we have
	\begin{align}\label{eqn:1.2}
		&&\zeta_p^{\rm des}(-k_1,\dots,-k_p)\zeta_q^{\rm des}(-l_1,\dots,-l_q)\hspace{6.6cm} \\
		&&= \sum_{\substack{i_1 + j_1=l_1\\ \scalebox{0.5}{\rotatebox{90}{$\cdots$}}\\i_q + j_q=l_q}}\prod_{a=1}^q(-1)^{i_a}\binom{l_a}{i_a} \zeta_{p+q}^{\rm des}(-k_1,\dots,-k_{p-1},-k_p-i_1- \cdots -i_q,-j_1,\dots,-j_q). \nonumber
	\end{align}
\end{thm}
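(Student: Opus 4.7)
My approach is to reduce the identity to an equality of generating functions and then read off the coefficient of a suitable monomial. First I would establish the key generating-function identity
\begin{equation*}
Z_{\scalebox{0.5}{\rm FKMT}}(t_1,\dots,t_p)\,Z_{\scalebox{0.5}{\rm FKMT}}(s_1,\dots,s_q)
=Z_{\scalebox{0.5}{\rm FKMT}}(t_1,\dots,t_{p-1},\,t_p-s_1-\cdots-s_q,\,s_1,\dots,s_q).
\end{equation*}
This follows by iterating Corollary~\ref{crl:1.1.1} to obtain the factorization $Z_{\scalebox{0.5}{\rm FKMT}}(t_1,\dots,t_r)=\prod_{i=1}^{r}Z_{\scalebox{0.5}{\rm FKMT}}(t_i+\cdots+t_r)$, applied to each factor on the left. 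This produces a product of $p+q$ single-variable $Z_{\scalebox{0.5}{\rm FKMT}}$'s whose arguments are $T_i:=t_i+\cdots+t_p$ and $S_j:=s_j+\cdots+s_q$. Lemma~\ref{lmm:1.1} then re-assembles this product back into a single $Z_{\scalebox{0.5}{\rm FKMT}}$ whose arguments are the consecutive differences of the list $(T_1,\dots,T_p,S_1,\dots,S_q)$, namely $T_i-T_{i+1}=t_i$, $T_p-S_1=t_p-s_1-\cdots-s_q$, $S_j-S_{j+1}=s_j$, and finally $S_q=s_q$, giving the right-hand side above.

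Next I would compare coefficients of $\prod_{a=1}^{p}\frac{(-t_a)^{k_a}}{k_a!}\prod_{b=1}^{q}\frac{(-s_b)^{l_b}}{l_b!}$ on both sides. By the definition~(\ref{eqn:0.4}) of $Z_{\scalebox{0.5}{\rm FKMT}}$, the left side immediately yields $\zeta_p^{\rm des}(-k_1,\dots,-k_p)\,\zeta_q^{\rm des}(-l_1,\dots,-l_q)$. For the right side, the only ``mixed'' factor in the formal expansion is $(-(t_p-s_1-\cdots-s_q))^{m_p}=(-t_p+s_1+\cdots+s_q)^{m_p}$; expanding this by the multinomial theorem with $m_p=k_p+i_1+\cdots+i_q$ and combining each $s_b^{i_b}$ with the $(-s_b)^{j_b}$ coming from the $(p+b)$-th slot of $Z_{\scalebox{0.5}{\rm FKMT}}$ produces $(-1)^{i_b}(-s_b)^{l_b}$, where $l_b=i_b+j_b$. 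The factorial recombination $l_b!/(i_b!\,j_b!)=\binom{l_b}{i_b}$ turns the remaining numerical coefficient into $\prod_a (-1)^{i_a}\binom{l_a}{i_a}$, and the argument list of $\zeta_{p+q}^{\rm des}$ becomes $(-k_1,\dots,-k_{p-1},-(k_p+i_1+\cdots+i_q),-j_1,\dots,-j_q)$. Summing over splittings $(i_a,j_a)$ with $i_a+j_a=l_a$ reproduces the right-hand side of~(\ref{eqn:1.2}) exactly.

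The hard part is not deep; it is the careful bookkeeping of signs and factorials in the second step, namely recognizing $s_b^{i_b}(-s_b)^{j_b}=(-1)^{i_b}(-s_b)^{l_b}$, rebalancing $\frac{1}{i_b!j_b!}=\frac{1}{l_b!}\binom{l_b}{i_b}$, and tracking the shift $k_p\mapsto k_p+i_1+\cdots+i_q$ in the $p$-th argument of $\zeta_{p+q}^{\rm des}$ that arises from the multinomial expansion. Lemma~\ref{lmm:1.2} is on hand as a convenient rearrangement device should a step of the form ``move from a power of a sum to a sum of products of powers'' be needed during this collation. Once the generating-function identity above is in place, the remainder is purely combinatorial.
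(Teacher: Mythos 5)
Your proposal is correct and follows essentially the same route as the paper: both factor each generating function into single-variable pieces via the recurrence of Corollary \ref{crl:1.1.1}, reassemble the combined product into a single $(p+q)$-variable $Z_{\scalebox{0.5}{\rm FKMT}}$ using Lemma \ref{lmm:1.1}, and then extract coefficients by a multinomial expansion of the one mixed argument (with Lemma \ref{lmm:1.2} handling the power-of-a-sum rearrangement), recombining signs and factorials into $\prod_a(-1)^{i_a}\binom{l_a}{i_a}$. The only difference is the labeling of the two variable blocks, so nothing further is needed.
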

\begin{proof}
	Using the equation (\ref{eqn:1.2.1}) repeatedly, we get
{\small 	
	\begin{equation*}
		Z_{\scalebox{0.5}{\rm FKMT}}(s_1,\dots,s_p)Z_{\scalebox{0.5}{\rm FKMT}}(t_1,\dots,t_q)
		= Z_{\scalebox{0.5}{\rm FKMT}}(s_1+\cdots+s_p) \cdots Z_{\scalebox{0.5}{\rm FKMT}}(s_p)Z_{\scalebox{0.5}{\rm FKMT}}(t_1+\cdots+t_q)\cdots Z_{\scalebox{0.5}{\rm FKMT}}(t_q).
	\end{equation*}
	By putting $u_i = \left\{\begin{array}{cc}
			s_i+\cdots+s_p & (1\leq i \leq p), \\
			t_{i-p}+\cdots+t_q & (p+1\leq i \leq p+q),
		\end{array}\right.$ and applying the equation (\ref{eqn:1.1}) to the above equation, we have
	\begin{align*}
		&Z_{\scalebox{0.5}{\rm FKMT}}(s_1,\dots,s_p)Z_{\scalebox{0.5}{\rm FKMT}}(t_1,\dots,t_q) \\
		=& Z_{\scalebox{0.5}{\rm FKMT}}(s_1,\dots,s_{p-1},s_p-t_1-\cdots-t_q,t_1,\dots,t_q) \\
		=& \sum_{k_1,\dots,k_p\geq0}\frac{(-s_1)^{k_1}\cdots(-s_{p-1})^{k_{p-1}}(-s_p+t_1+\cdots+t_q)^{k_p}}{k_1!\cdots k_{p-1}!k_p!} \\
		&\hspace{5em}\cdot\sum_{j_1,\dots,j_q\geq0}\frac{(-t_1)^{j_1}\cdots(-t_q)^{j_q}}{j_1!\cdots j_q!}\zeta_{p+q}^{\rm des}(-k_1,\dots,-k_p,-j_1,\dots,-j_q) \\
		=& \sum_{\substack{k_1,\dots,k_{p-1}\geq0\\j_1,\dots,j_q\geq0}}\frac{(-s_1)^{k_1}\cdots(-s_{p-1})^{k_{p-1}}}{k_1!\cdots k_{p-1}!}\frac{(-t_1)^{j_1}\cdots(-t_q)^{j_q}}{j_1!\cdots j_q!} \\
		&\hspace{5em}\cdot\sum_{k_p\geq0}\frac{(-s_p+t_1+\cdots+t_q)^{k_p}}{k_p!}\zeta_{p+q}^{\rm des}(-k_1,\dots,-k_p,-j_1,\dots,-j_q). \\
		\intertext{Using Lemma \ref{lmm:1.2}, we get}
		&Z_{\scalebox{0.5}{\rm FKMT}}(s_1,\dots,s_p)Z_{\scalebox{0.5}{\rm FKMT}}(t_1,\dots,t_q) \\
		=& \sum_{\substack{k_1,\dots,k_{p-1}\geq0\\j_1,\dots,j_q\geq0}}\frac{(-s_1)^{k_1}\cdots(-s_{p-1})^{k_{p-1}}}{k_1!\cdots k_{p-1}!}\frac{(-t_1)^{j_1}\cdots(-t_q)^{j_q}}{j_1!\cdots j_q!} \\
		&\hspace{3em}\cdot\sum_{k_p,i_1,\dots,i_q\geq0}\frac{(-s_p)^{k_p}t_1^{i_1}\cdots t_q^{i_q}}{k_p!i_1!\cdots i_q!}\zeta_{p+q}^{\rm des}(-k_1,\dots,-k_p-i_1-\cdots-i_q,-j_1,\dots,-j_q) \\
		=& \sum_{k_1,\dots,k_p\geq0}\frac{(-s_1)^{k_1}\cdots(-s_p)^{k_p}}{k_1!\cdots k_p!} \\
		&\hspace{1em}\cdot\sum_{\substack{i_1,\dots,i_q\geq0\\j_1,\dots,j_q\geq0}}\frac{t_1^{i_1}\cdots t_q^{i_q}}{i_1!\cdots i_q!}\frac{(-t_1)^{j_1}\cdots(-t_q)^{j_q}}{j_1!\cdots j_q!}\zeta_{p+q}^{\rm des}(-k_1,\dots,-k_p-i_1-\cdots-i_q,-j_1,\dots,-j_q) 
	\end{align*}
	\begin{align*}
		&Z_{\scalebox{0.5}{\rm FKMT}}(s_1,\dots,s_p)Z_{\scalebox{0.5}{\rm FKMT}}(t_1,\dots,t_q) \\
		=& \sum_{k_1,\dots,k_p\geq0}\frac{(-s_1)^{k_1}\cdots(-s_p)^{k_p}}{k_1!\cdots k_p!} \\
		&\hspace{0em}\cdot\sum_{\substack{i_1,\dots,i_q\geq0\\j_1,\dots,j_q\geq0}}\frac{(-t_1)^{i_1+j_1}\cdots(-t_q)^{i_q+j_q}}{i_1!\cdots i_q!j_1!\cdots j_q!}(-1)^{i_1+\cdots+i_q}\zeta_{p+q}^{\rm des}(-k_1,\dots,-k_p-i_1-\cdots-i_q,-j_1,\dots,-j_q) \\
		=& \sum_{\substack{k_1,\dots,k_p\geq0\\l_1,\dots,l_q\geq0}}\frac{(-s_1)^{k_1}\cdots(-s_p)^{k_p}}{k_1!\cdots k_p!}\frac{(-t_1)^{l_1}\cdots(-t_q)^{l_q}}{l_1!\cdots l_q!} \\
		&\hspace{0em}\cdot\sum_{\substack{i_1 + j_1=l_1\\ \scalebox{0.5}{\rotatebox{90}{$\cdots$}}\\i_q + j_q=l_q}}\prod_{a=1}^q\binom{l_a}{i_a}(-1)^{i_a} \zeta_{p+q}^{\rm des}(-k_1,\dots,-k_p-i_1-\cdots-i_q,-j_1,\dots,-j_q). \\
	\end{align*}}
	On the other hand, by the definition of $Z_{\scalebox{0.5}{\rm FKMT}}(t_1,\dots,t_q)$, we have
{\small 
	\begin{align*}
		Z_{\scalebox{0.5}{\rm FKMT}}&(s_1,\dots,s_p)Z_{\scalebox{0.5}{\rm FKMT}}(t_1,\dots,t_q) \\
		=& \left\{\sum_{k_1,\dots,k_p\geq0}\frac{(-s_1)^{k_1}\cdots(-s_p)^{k_p}}{k_1!\cdots k_p!}\zeta_p^{\rm des}(-k_1,\dots,-k_p)\right\} \\
		&\hspace{10em}\cdot\left\{\sum_{l_1,\dots,l_q\geq0}\frac{(-t_1)^{l_1}\cdots(-t_q)^{l_q}}{l_1!\cdots l_q!}\zeta_q^{\rm des}(-l_1,\dots,-l_q)\right\} \\
		=& \sum_{\substack{k_1,\dots,k_p\geq0\\l_1,\dots,l_q\geq0}}\frac{(-s_1)^{k_1}\cdots(-s_p)^{k_p}}{k_1!\cdots k_p!}\frac{(-t_1)^{l_1}\cdots(-t_q)^{l_q}}{l_1!\cdots l_q!}\zeta_p^{\rm des}(-k_1,\dots,-k_p)\zeta_q^{\rm des}(-l_1,\dots,-l_q). \\
	\end{align*}}
	Therefore, we obtain the equation (\ref{eqn:1.2}).
\end{proof}
Here are examples for $(p,q)=(1,1),\ (1,2)$.
\begin{exa}
For $a,b,c\in\mathbb{N}_0$, we have
	\begin{align*}
		\zeta_1^{\rm des}(-a)\zeta_1^{\rm des}(-b)&=\sum_{i_1+j_1=b}(-1)^{i_1}\binom{b}{i_1}\zeta_2^{\rm des}(-a-i_1,-j_1), \\
		\zeta_1^{\rm des}(-a)\zeta_2^{\rm des}(-b,-c)&=\sum_{\substack{i_1+j_1=b \\
			i_2+j_2=c}}(-1)^{i_1+i_2}\binom{b}{i_1}\binom{c}{i_2}\zeta_3^{\rm des}(-a-i_1-i_2,-j_1,-j_2).
	\end{align*}
\end{exa}

\begin{rem}
In order to prove Theorem \ref{prop:2.1}, we essentially used only the property (\ref{eqn:1.2.1}) of $Z_{\scalebox{0.5}{\rm FKMT}}(t_1,\dots,t_r)$, which also holds for  $Z_{\scalebox{0.5}{\rm EMS}}(t_1,\dots,t_r)$, so $\zeta_r^{\rm des}(-k_1,\dots,-k_r)$ satisfies the same shuffle-type product formula to $\zeta_{\scalebox{0.5}{\rm EMS}}(-k_1,\dots,-k_r)$ introduced in \cite{EMS1}.
\end{rem}

\section{More general product formulae}
In this section, we prove a generalization of the equation (\ref{eqn:0.9}) in Proposition \ref{thm:3.1} and {\it general} ``shuffle product'' between $\zeta_r^{\rm des}(s_1,\dots,s_{r-1})$ and $\zeta_1^{\rm des}(-l)$ in Proposition \ref{crl:3.2}. We assume $r\in\mathbb{N}_{\geq2}$ in this section. We start with the following lemma on the property of the Pochhammer symbol. 
\begin{lmm}\label{lmm:3.2}
	For $a,b\in\mathbb{C}$ and $n\in\mathbb{N}_0$, we have
	\begin{equation*}
		(a+b)_n=\sum_{i+j=n}\binom{n}{i}(a)_i(b)_j.
	\end{equation*}
\end{lmm}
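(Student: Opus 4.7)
The identity is the classical Vandermonde--Chu convolution for rising factorials, so I expect no real obstacle. My preferred plan is to read it off from the binomial series identity
$$\sum_{n\geq 0}(s)_n\frac{x^n}{n!}=(1-x)^{-s},$$
which holds as an identity of formal power series in $\mathbb{C}[s][[x]]$ (it can be verified term by term from $(s)_0=1$ and the recurrence $(s)_{n+1}=(s+n)(s)_n$, or cited as a standard fact).

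Granting this, the strategy is simply to multiply the series for $s=a$ and $s=b$ and use $(1-x)^{-a}(1-x)^{-b}=(1-x)^{-(a+b)}$:
$$\left(\sum_{i\geq 0}(a)_i\frac{x^i}{i!}\right)\left(\sum_{j\geq 0}(b)_j\frac{x^j}{j!}\right)=\sum_{n\geq 0}(a+b)_n\frac{x^n}{n!}.$$
The left-hand side is the Cauchy product $\sum_n\frac{x^n}{n!}\sum_{i+j=n}\binom{n}{i}(a)_i(b)_j$, and comparing coefficients of $x^n/n!$ yields the claim. Since this is a formal-series identity, no convergence assumptions on $a,b\in\mathbb{C}$ are needed.

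As a self-contained alternative, I would induct on $n$. The base $n=0$ is trivial. For the step, apply $(a+b)_{n+1}=(a+b)_n(a+b+n)$ to the inductive hypothesis and exploit the split $a+b+n=(a+i)+(b+j)$ valid whenever $i+j=n$:
$$(a+b)_{n+1}=\sum_{i+j=n}\binom{n}{i}(a)_i(b)_j\bigl[(a+i)+(b+j)\bigr].$$
Absorbing $(a+i)$ into $(a)_i$ to form $(a)_{i+1}$ (and symmetrically for the $b$-term) and reindexing so the subscripts sum to $n+1$ produces two sums which combine, via Pascal's rule $\binom{n}{i-1}+\binom{n}{i}=\binom{n+1}{i}$, into $\sum_{i+j=n+1}\binom{n+1}{i}(a)_i(b)_j$; the boundary terms $i=0$ and $i=n+1$ each come from a single sum and already carry the correct binomial coefficient. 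The only thing to be careful about is bookkeeping in the reindexing, but nothing deeper arises.
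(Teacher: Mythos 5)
Your main argument is exactly the paper's proof: expand $(1-t)^{-a}$ and $(1-t)^{-b}$ via the binomial series, use $(1-t)^{-a}(1-t)^{-b}=(1-t)^{-a-b}$, and compare coefficients. The inductive alternative is a fine bonus but not needed; the proposal is correct and matches the paper's approach.
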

\begin{proof}
	By considering the Taylor expansion of $(1-t)^{-a}$, we get
	\begin{equation*}
		(1-t)^{-a}=\sum_{n\geq0}\frac{(a)_n}{n!}t^n.
	\end{equation*}
	Because we have $(1-t)^{-a-b}=(1-t)^{-a}(1-t)^{-b}$, by comparing the coefficient of this equation, we obtain the claim.
\end{proof}
The above lemma is used in the proof of Proposition \ref{prp:3.1}. \\
Next, we prove a property of $\mathcal{G}_r((u_j);(v_j))$ defined by the equation (\ref{eqn:1.1.3}).

\begin{prp}\label{prp:3.1}
We have
	\begin{align}\label{eqn:3.1}
	\mathcal{G}_r&\left(u_1,\dots,u_r; v_1,\dots,v_{r-1},\frac{u_r+z}{u_r}v_{r-1}\right) \\
	&=(z+1)\mathcal{G}_{r-1}(u_1,\dots,u_{r-2},u_{r-1}+u_r+z; v_1,\dots,v_{r-1}). \nonumber
	\end{align}
\end{prp}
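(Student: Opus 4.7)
The plan is to prove Proposition \ref{prp:3.1} by direct substitution into the defining product of $\mathcal{G}_r$, separating the last factor (which collapses to $z+1$) from the remaining $r-1$ factors (which combine into $\mathcal{G}_{r-1}$).

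First I would set $w_r:=\frac{u_r+z}{u_r}v_{r-1}$ and substitute $v_r=w_r$ into
$$\mathcal{G}_r(u_1,\dots,u_r;v_1,\dots,v_{r-1},v_r)=\prod_{j=1}^r\Bigl\{1-(u_jv_j+\cdots+u_rv_r)(v_j^{-1}-v_{j-1}^{-1})\Bigr\}.$$
The key numerical observation is that $u_rw_r=(u_r+z)v_{r-1}$, so for each index $1\le j\le r-1$ the inner sum reorganizes as
$$u_jv_j+\cdots+u_{r-1}v_{r-1}+u_rw_r=u_jv_j+\cdots+u_{r-2}v_{r-2}+(u_{r-1}+u_r+z)v_{r-1}.$$
This is precisely the inner sum of the $j$-th factor of $\mathcal{G}_{r-1}(u_1,\dots,u_{r-2},u_{r-1}+u_r+z;v_1,\dots,v_{r-1})$, and since the differences $v_j^{-1}-v_{j-1}^{-1}$ depend only on $v_1,\dots,v_{r-1}$, the product over $j=1,\dots,r-1$ equals $\mathcal{G}_{r-1}(u_1,\dots,u_{r-2},u_{r-1}+u_r+z;v_1,\dots,v_{r-1})$ on the nose.

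Next I would handle the $j=r$ factor separately. Using $w_r^{-1}=\frac{u_r}{u_r+z}v_{r-1}^{-1}$ and $u_rw_r v_{r-1}^{-1}=u_r+z$, it simplifies as
$$1-u_rw_r\bigl(w_r^{-1}-v_{r-1}^{-1}\bigr)=1-u_r+u_rw_r v_{r-1}^{-1}=1-u_r+(u_r+z)=z+1.$$
Multiplying this by the product of the first $r-1$ factors yields the right-hand side of \eqref{eqn:3.1}, completing the proof.

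There is no real obstacle here; the statement is a bookkeeping identity and the only point requiring care is confirming that the specific substitution $v_r=\frac{u_r+z}{u_r}v_{r-1}$ simultaneously (i) makes the last factor collapse to the scalar $z+1$ and (ii) renormalizes the coefficient of $v_{r-1}$ inside the remaining factors from $u_{r-1}$ to $u_{r-1}+u_r+z$. The proposed substitution is engineered precisely so that both effects occur, so the verification is purely algebraic.
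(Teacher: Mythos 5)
Your proof is correct and is essentially identical to the paper's: both substitute $v_r=\frac{u_r+z}{u_r}v_{r-1}$ directly into the defining product, use $u_rv_r=(u_r+z)v_{r-1}$ to merge the coefficient of $v_{r-1}$ into $u_{r-1}+u_r+z$ across the first $r-1$ factors, and collapse the last factor to $z+1$. No differences worth noting.
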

\begin{proof}
By the definition of $\mathcal{G}_r((u_j);(v_j))$, we have
\begin{align*}
	&\mathcal{G}_r\left(u_1,\dots,u_r; v_1,\dots,v_{r-1},\frac{u_r+z}{u_r}v_{r-1}\right) \\
	=&\prod_{j=1}^{r-1}\left\{1-\left(u_jv_j+\cdots+u_{r-1} v_{r-1}+u_r\frac{u_r+z}{u_r}v_{r-1}\right)(v_j^{-1}-v_{j-1}^{-1})\right\} \\
	&\cdot \left\{1-u_r\frac{u_r+z}{u_r}v_{r-1}\left(\left(\frac{u_r+z}{u_r}v_{r-1}\right)^{-1}-v_{r-1}^{-1}\right)\right\} \\
	=&\prod_{j=1}^{r-1}\left\{1-\left(u_jv_j+\cdots+u_{r-1} v_{r-1}+(u_r+z)v_{r-1}\right)(v_j^{-1}-v_{j-1}^{-1})\right\} \\
	&\cdot \left\{1-(u_r+z)v_{r-1}\left(\frac{u_r}{u_r+z}-1\right)v_{r-1}^{-1}\right\} 
\end{align*}
\begin{align*}
	=&\prod_{j=1}^{r-1}\left\{1-\left(u_jv_j+\cdots+u_{r-2} v_{r-2}+(u_{r-1}+u_r+z)v_{r-1}\right)(v_j^{-1}-v_{j-1}^{-1})\right\} \\
	&\cdot \left\{1-\left(u_r-(u_r+z)\right)\right\}\\
	=&(z+1)\mathcal{G}_{r-1}(u_1,\dots,u_{r-2},u_{r-1}+u_r+z; v_1,\dots,v_{r-1}). 
\end{align*}
\end{proof}

It is easy to prove the following lemma by comparing coefficients $a^r_{\Bold{\footnotesize$l$},\Bold{\footnotesize$m$}}$ of the equations (\ref{eqn:1.1.3}) and (\ref{eqn:1.1.4}).
\begin{lmm}\label{lmm:3.1}
	Let $\Bold{$l$}:=(l_j)\in\mathbb{N}_0^r$ and $\Bold{$m$}:=(m_j)\in\mathbb{Z}^r$. If $m_r\neq l_r-1, l_r$ or $m_r<0$, then we have
	\begin{equation}
		a^r_{\Bold{\footnotesize$l$},\Bold{\footnotesize$m$}}=0.
	\end{equation}
\end{lmm}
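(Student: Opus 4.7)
The plan is to read off the allowed $(l_r, m_r)$-exponents directly from the product form of $\mathcal{G}_r((u_j);(v_j))$ given in equation (\ref{eqn:1.1.3}). The key observation I want to exploit is that $u_r$ and $v_r$ enter each factor only through a very restricted set of summands, so a single factor-by-factor expansion is enough.

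First, I would fix the $j$-th factor
\[
F_j := 1 - \Bigl(\sum_{k=j}^{r} u_k v_k\Bigr)(v_j^{-1} - v_{j-1}^{-1}).
\]
For $j < r$, the only summand of $F_j$ carrying $u_r$ is $-u_r v_r(v_j^{-1}-v_{j-1}^{-1})$, and it carries exactly $u_r^{1} v_r^{1}$ (the remaining $v$-dependence sits in indices strictly less than $r$). Every other summand of $F_j$ has no $u_r$ and no $v_r$. For $j = r$ the factor simplifies to
\[
F_r = 1 - u_r + u_r v_r\, v_{r-1}^{-1},
\]
whose three summands contribute, in $(u_r^a, v_r^b)$, the pairs $(0,0)$, $(1,0)$, and $(1,1)$.

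Next, I would expand $\prod_{j=1}^{r} F_j$ by choosing one summand from each factor. Let $\alpha$ denote the number of indices $j < r$ at which the $u_r v_r$-summand is selected, and record the choice from $F_r$ by a pair $(\beta_L,\beta_M) \in \{(0,0),(1,0),(1,1)\}$. Then the contribution to the monomial in question satisfies $l_r = \alpha + \beta_L$ and $m_r = \alpha + \beta_M$. Running through the three cases for $(\beta_L,\beta_M)$ gives $m_r - l_r \in \{0, -1, 0\}$, hence $m_r \in \{l_r, l_r - 1\}$.

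Finally, since every summand of every $F_j$ carries $v_r$ with non-negative exponent (the only potential source of $v_r^{-1}$, namely $v_j^{-1}$ at $j = r$, is paired with $v_r$ and already cancelled in the simplification of $F_r$), we automatically have $m_r \geq 0$. Thus whenever $m_r \notin \{l_r, l_r - 1\}$ or $m_r < 0$, no choice of summands in the expansion can produce the monomial $\prod_j u_j^{l_j} v_j^{m_j}$, so $a^r_{\mathbf{l},\mathbf{m}} = 0$. I do not foresee a real obstacle; the whole argument is careful bookkeeping of the appearances of $u_r$ and $v_r$ across the factorization, with the one subtle point being the cancellation $u_r v_r \cdot v_r^{-1} = u_r$ inside $F_r$, which is exactly what allows the case $m_r = l_r - 1$.
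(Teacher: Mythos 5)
Your proof is correct and follows essentially the same route as the paper, which simply asserts the lemma by comparing coefficients in the defining product expansion of $\mathcal{G}_r$; your factor-by-factor bookkeeping of where $u_r$ and $v_r$ can occur (including the cancellation $u_rv_rv_r^{-1}=u_r$ in the last factor) is exactly the "easy" verification the paper leaves to the reader.
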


For our simplicities, we employ the following symbols:
\begin{notation}
Let $s_1,\dots,s_r$ and $z$ be indeterminates. For $r$-tuple symbol $\Bold{$s$}:=(s_1,\dots,s_r)$, the symbols $\Bold{$s$}'$ and $\Bold{$s$}^-$ are defined by
\begin{align*}
	\Bold{$s$}'&:=(s_1,\dots,s_{r-2},s_{r-1}+s_r), \\
	\Bold{$s$}^-&:=(s_1,\dots,s_{r-1}), \\
	|\Bold{$s$}|&:=s_1+\cdots+s_r,
\end{align*}
and we define $\Bold{$z$}:=(\underbrace{0,\dots,0}_{r-1},z)$.
\end{notation}

\begin{lmm}\label{lmm:3.3}
	For the functions $f:\mathbb{Z}^r\rightarrow\mathbb{C}$ and $g:\mathbb{N}_0^{r+1}\rightarrow\mathbb{C}$ with
	\begin{equation*}
		\#\{\Bold{$n$}\in\mathbb{Z}^r\ |\ f(\Bold{$n$})\neq0\}<\infty \ \mbox{and} \ \#\{\Bold{$a$}\in\mathbb{N}_0^{r+1}\ |\ g(\Bold{$a$})\neq0\}<\infty,
	\end{equation*}
	we have
	\begin{align}\label{eqn:3.7}
		\sum_{\substack{\Bold{\footnotesize$n$}=(n_j)\in\mathbb{Z}^r \\
			|\Bold{\footnotesize$n$}|=0}}f(\Bold{$n$})
			&=\sum_{\substack{\Bold{\footnotesize$m$}=(m_j)\in\mathbb{Z}^{r-1} \\
			|\Bold{\footnotesize$m$}|=0}}\sum_{\substack{p+q=m_{r-1} \\
			p,q\in\mathbb{Z}}}f(\Bold{$m$}^-,p,q), \\ 
		\label{eqn:3.8}
		\sum_{\Bold{\footnotesize$l$}=(l_j)\in\mathbb{N}_0^r}g(\Bold{$l$}',l_{r-1},l_r)
		&=\sum_{\Bold{\footnotesize$k$}=(k_j)\in\mathbb{N}_0^{r-1}}\sum_{\substack{p+q=k_{r-1} \\
			p,q\in\mathbb{N}_0}}g(\Bold{$k$},p,q).
	\end{align}
\end{lmm}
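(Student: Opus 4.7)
The plan is to prove both identities as bijective changes of summation variables: in each case the last two coordinates of the original index tuple are grouped according to their sum. Since $f$ and $g$ are assumed to have finite support, every sum in sight is finite, so there is no convergence concern and it suffices to match summands one-to-one.

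For equation (3.7), I would partition the left-hand sum according to the value $m_{r-1}:=n_{r-1}+n_r$. Setting $m_j:=n_j$ for $1\le j\le r-2$, the constraint $n_1+\cdots+n_r=0$ becomes $m_1+\cdots+m_{r-1}=0$, and the pair $(n_{r-1},n_r)$ ranges over all $(p,q)\in\mathbb{Z}^2$ with $p+q=m_{r-1}$. With this choice the tuple $(\mathbf{m}^-,p,q)=(n_1,\ldots,n_{r-2},n_{r-1},n_r)$ reproduces $\mathbf{n}$ exactly, so $f(\mathbf{n})=f(\mathbf{m}^-,p,q)$ and the reindexed sum is precisely the right-hand side.

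For equation (3.8), I would carry out the analogous bookkeeping under non-negativity constraints: set $k_j:=l_j$ for $1\le j\le r-2$ and $k_{r-1}:=l_{r-1}+l_r$. Because $l_{r-1},l_r\in\mathbb{N}_0$, also $k_{r-1}\in\mathbb{N}_0$, and the pair $(l_{r-1},l_r)$ runs exactly over $(p,q)\in\mathbb{N}_0^2$ with $p+q=k_{r-1}$. By the definition of $\mathbf{l}'$ we have $\mathbf{l}'=(l_1,\ldots,l_{r-2},l_{r-1}+l_r)=\mathbf{k}$, so $g(\mathbf{l}',l_{r-1},l_r)=g(\mathbf{k},p,q)$, and reorganising the sums yields the right-hand side.

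The main obstacle, such as it is, is purely notational: one must keep the precise correspondence between tuple arguments and constraints straight when juggling $\mathbf{m}^-$, $\mathbf{l}'$, and the two summed-over coordinates. The finite-support hypothesis on $f$ and $g$ is only used to license the unrestricted interchange of the finite sums, which is the final housekeeping step in each case.
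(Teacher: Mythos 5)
Your proof is correct and follows essentially the same route as the paper: the paper also reindexes by grouping the last two coordinates according to their sum (via successive substitutions $m_j:=n_j$, $m_{r-1}:=-m_1-\cdots-m_{r-2}$, $p:=n_{r-1}$, $q:=m_{r-1}-n_{r-1}$), which is exactly your bijection $m_{r-1}=n_{r-1}+n_r$, and it likewise proves only (\ref{eqn:3.7}) in detail, noting (\ref{eqn:3.8}) is analogous. The finite-support hypothesis plays the same bookkeeping role in both arguments.
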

\begin{proof}
We only prove the equation (\ref{eqn:3.7}), because the proof of the equation (\ref{eqn:3.8}) can be done in the same way to that of the equation (\ref{eqn:3.7}). We have
	\begin{align*}
		\sum_{\substack{\Bold{\footnotesize$n$}=(n_j)\in\mathbb{Z}^r \\
			|\Bold{\footnotesize$n$}|=0}}f(\Bold{$n$})
			&=\sum_{n_1,\dots,n_{r-2},n_{r-1}\in\mathbb{Z}}f(n_1,\dots,n_{r-2},n_{r-1},-n_1-\cdots-n_{r-2}-n_{r-1}) \\
			&=\sum_{m_1,\dots,m_{r-2}\in\mathbb{Z}}\sum_{n_{r-1}\in\mathbb{Z}}f(m_1,\dots,m_{r-2},n_{r-1},-m_1-\cdots-m_{r-2}-n_{r-1}). \\
			\intertext{When we put $m_{r-1}:=-m_1-\cdots-m_{r-2}$, then $m_{r-1}$ can run over all integers. So we get}
			&=\sum_{\substack{\Bold{\footnotesize$m$}=(m_j)\in\mathbb{Z}^{r-1} \\
			|\Bold{\footnotesize$m$}|=0}}\sum_{n_{r-1}\in\mathbb{Z}}f(m_1,\dots,m_{r-2},n_{r-1},m_{r-1}-n_{r-1}). \\
			\intertext{When we put $p:=n_{r-1}$ and $q:=m_{r-1}-n_{r-1}$, then $p$ and $q$ run over all integers with $p+q=m_{r-1}$. So we obtain}
			&=\sum_{\substack{\Bold{\footnotesize$m$}=(m_j)\in\mathbb{Z}^{r-1} \\
			|\Bold{\footnotesize$m$}|=0}}\sum_{\substack{p+q=m_{r-1} \\
			p,q\in\mathbb{Z}}}f(\Bold{$m$}^-,p,q).
	\end{align*}
\end{proof}

Using Lemma \ref{lmm:3.1} and Lemma \ref{lmm:3.3}, we get the following corollary.
\begin{crl}\label{crl:3.1}
	For $\Bold{$l$}:=(l_1,\dots,l_r)\in\mathbb{N}_0^r$ and $\Bold{$m$}:=(m_1,\dots,m_{r-1})\in\mathbb{Z}^{r-1}$ with $|\Bold{$m$}|=0$, we have
	\begin{align}\label{eqn:3.2}
		a^r_{\Bold{\footnotesize$l$},(\Bold{\footnotesize$m$}^-,m_{r-1}-l_r,l_r)}+a^r_{\Bold{\footnotesize$l$},(\Bold{\footnotesize$m$}^-,m_{r-1}-l_r+1,l_r-1)}&=\binom{l_{r-1}+l_r}{l_{r-1}}a^{r-1}_{\Bold{\footnotesize$l$}',\Bold{\footnotesize$m$}} \\
		&=-a^r_{\left(\Bold{\footnotesize$l$}^-,l_r+1\right),(\Bold{\footnotesize$m$}^-,m_{r-1}-l_r,l_r)}. \nonumber
	\end{align}
\end{crl}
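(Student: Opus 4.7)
The plan is to extract both equalities from Proposition~\ref{prp:3.1} by comparing coefficients of $(u,v)$-monomials in (\ref{eqn:3.1}), keeping $u_r$ and $z$ as free variables. First, substitute $v_r=\frac{u_r+z}{u_r}v_{r-1}$ into the defining expansion (\ref{eqn:1.1.4}) of $\mathcal{G}_r$. The factor contributed by the last coordinate becomes $u_r^{l_r}\bigl(\frac{u_r+z}{u_r}\bigr)^{m_r}v_{r-1}^{m_r}$, and by Lemma~\ref{lmm:3.1} only $m_r\in\{l_r-1,l_r\}$ contribute, simplifying this factor to $(u_r+z)^{l_r}\,v_{r-1}^{l_r}$ or $u_r(u_r+z)^{l_r-1}\,v_{r-1}^{l_r-1}$, respectively. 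Next, I apply Lemma~\ref{lmm:3.3} to reparametrize the constraint $|\mathbf{m}|=0$ (for $\mathbf{m}\in\mathbb{Z}^r$) by a pair consisting of $\mathbf{m}\in\mathbb{Z}^{r-1}$ with $|\mathbf{m}|=0$ together with a splitting $p+q=m_{r-1}$, where $q$ plays the role of the original $m_r$. After these reductions, the coefficient of $u_1^{l_1}\cdots u_{r-1}^{l_{r-1}}\prod_{j=1}^{r-1}v_j^{m_j}$ in the LHS of (\ref{eqn:3.1}) equals, as a polynomial in $u_r$ and $z$,
\[
\sum_{l_r\geq0}\Bigl\{a^r_{\mathbf{l},(\mathbf{m}^-,m_{r-1}-l_r,l_r)}(u_r+z)^{l_r}+a^r_{\mathbf{l},(\mathbf{m}^-,m_{r-1}-l_r+1,l_r-1)}\,u_r(u_r+z)^{l_r-1}\Bigr\},
\]
where $\mathbf{l}=(l_1,\ldots,l_r)$.

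For the RHS of (\ref{eqn:3.1}), I expand $(u_{r-1}+u_r+z)^L$ by the multinomial theorem and read off the coefficient of $u_{r-1}^{l_{r-1}}$, which is $\binom{L}{l_{r-1}}(u_r+z)^{L-l_{r-1}}$. Writing $L=l_{r-1}+M$ and setting $C_M:=\binom{l_{r-1}+M}{l_{r-1}}\,a^{r-1}_{(l_1,\ldots,l_{r-2},l_{r-1}+M),\mathbf{m}}$, the RHS coefficient of the same monomial equals $(z+1)\sum_{M\geq0}C_M(u_r+z)^M$.

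Finally, I use the algebraic identity $u_r=(u_r+z)-z$ to rewrite $u_r(u_r+z)^{l_r-1}=(u_r+z)^{l_r}-z(u_r+z)^{l_r-1}$, so that both sides are expressed in the basis $\{z^a(u_r+z)^b\}_{a,b\geq 0}$ of $\mathbb{Q}[u_r,z]$. Equating the coefficient of $z^0(u_r+z)^{l_r}$ on the two sides yields the first equality of the Corollary, with $\mathbf{l}'=(l_1,\ldots,l_{r-2},l_{r-1}+l_r)$; equating the coefficient of $z^1(u_r+z)^{l_r}$ yields $-a^r_{(\mathbf{l}^-,l_r+1),(\mathbf{m}^-,m_{r-1}-l_r,l_r)}=C_{l_r}$, which is the second equality. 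I expect the main obstacle to be the bookkeeping, namely verifying that the reduction to $m_r\in\{l_r-1,l_r\}$ from Lemma~\ref{lmm:3.1} and the reparametrization from Lemma~\ref{lmm:3.3} together produce exactly the indices of $a^r$ appearing in the statement; once this is arranged, the comparison in the $\{z^a(u_r+z)^b\}$ basis is mechanical.
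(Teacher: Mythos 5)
Your proposal is correct and follows essentially the same route as the paper: substitute $v_r=\frac{u_r+z}{u_r}v_{r-1}$ into the expansion (\ref{eqn:1.1.4}), restrict via Lemma \ref{lmm:3.1}, reindex via Lemma \ref{lmm:3.3}, expand $(u_{r-1}+u_r+z)^{k_{r-1}}$ binomially on the right-hand side of (\ref{eqn:3.1}), rewrite $u_r=(u_r+z)-z$, and compare coefficients of $z^0(u_r+z)^{l_r}$ and $z^1(u_r+z)^{l_r}$ (the paper phrases this as comparing (\ref{eqn:3.3}) with (\ref{eqn:3.4}), noting $a^r_{\Bold{\footnotesize$l$},(\Bold{\footnotesize$m$}^-,m_{r-1}+1,-1)}=0$ for the $l_r=0$ edge case, which your Lemma \ref{lmm:3.1} citation also covers). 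No gap.
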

\begin{proof}
	Let $r\in\mathbb{N}$. By the equation (\ref{eqn:1.1.4}) (the definition of the coefficient $a^r_{\Bold{\footnotesize$l$},\Bold{\footnotesize$m$}}$ of the function $\mathcal{G}_r$), we have
	{\small
	\begin{align}\label{eqn:3.3}
		\mathcal{G}_r\left(u_1,\dots,u_r; v_1,\dots,v_{r-1},\frac{u_r+z}{u_r}v_{r-1}\right)
		=\sum_{\substack{\Bold{\footnotesize$l$}=(l_j)\in\mathbb{N}_0^r\\
			\Bold{\footnotesize$n$}=(n_j)\in\mathbb{Z}^r \\
			|\Bold{\footnotesize$n$}|=0}}
		a^r_{\Bold{\footnotesize$l$},\Bold{\footnotesize$n$}}\left(\prod_{j=1}^ru_j^{l_j}\right)\left(\prod_{j=1}^{r-1}v_j^{n_j}\right)\left(\frac{u_r+z}{u_r}v_{r-1}\right)^{n_r}.
	\end{align}}
	By using the equation (\ref{eqn:3.7}) of Lemma \ref{lmm:3.3}, we have
	\begin{align*}
		&\mathcal{G}_r\left(u_1,\dots,u_r; v_1,\dots,v_{r-1},\frac{u_r+z}{u_r}v_{r-1}\right) \\
		&=\sum_{\substack{\Bold{\footnotesize$l$}=(l_j)\in\mathbb{N}_0^r\\
			\Bold{\footnotesize$m$}=(m_j)\in\mathbb{Z}^{r-1} \\
			|\Bold{\footnotesize$m$}|=0}}
			\sum_{\substack{p+q=m_{r-1}\\
				p,q\in\mathbb{Z}}}
		a^r_{\Bold{\footnotesize$l$},(\Bold{\footnotesize$m$}^-,p,q)}\left(\prod_{j=1}^ru_j^{l_j}\right)\left(\prod_{j=1}^{r-2}v_j^{m_j}\right)v_{r-1}^p\left(\frac{u_r+z}{u_r}v_{r-1}\right)^q \\
		&=\sum_{\substack{\Bold{\footnotesize$l$}=(l_j)\in\mathbb{N}_0^r\\
			\Bold{\footnotesize$m$}=(m_j)\in\mathbb{Z}^{r-1} \\
			|\Bold{\footnotesize$m$}|=0}}
			\sum_{\substack{p+q=m_{r-1}\\
				p,q\in\mathbb{Z}}}
		a^r_{\Bold{\footnotesize$l$},(\Bold{\footnotesize$m$}^-,p,q)}\left(\prod_{j=1}^ru_j^{l_j}\right)\left(\frac{u_r+z}{u_r}\right)^q\left(\prod_{j=1}^{r-1}v_j^{m_j}\right).
	\end{align*}
	By Lemma \ref{lmm:3.1}, we get $a^r_{\Bold{\footnotesize$l$},(\Bold{\footnotesize$m$}^-,p,q)}=0$ for $q\neq l_r-1,l_r$. So we have
	\begin{align*}
		&\mathcal{G}_r\left(u_1,\dots,u_r; v_1,\dots,v_{r-1},\frac{u_r+z}{u_r}v_{r-1}\right) \\
		&=\sum_{\substack{\Bold{\footnotesize$l$}=(l_j)\in\mathbb{N}_0^r\\
			\Bold{\footnotesize$m$}=(m_j)\in\mathbb{Z}^{r-1} \\
			|\Bold{\footnotesize$m$}|=0}}
		\left\{a^r_{\Bold{\footnotesize$l$},(\Bold{\footnotesize$m$}^-,m_{r-1}-l_r+1,l_r-1)}\left(\prod_{j=1}^{r-1}u_j^{l_j}\right)u_r(u_r+z)^{l_r-1}\left(\prod_{j=1}^{r-1}v_j^{m_j}\right)\right. \nonumber\\
		&\hspace{3.5cm}\left.+ a^r_{\Bold{\footnotesize$l$},(\Bold{\footnotesize$m$}^-,m_{r-1}-l_r,l_r)}\left(\prod_{j=1}^{r-1}u_j^{l_j}\right)(u_r+z)^{l_r}\left(\prod_{j=1}^{r-1}v_j^{m_j}\right)\right\} \\
		&=\sum_{\substack{\Bold{\footnotesize$l$}=(l_j)\in\mathbb{N}_0^r\\
			\Bold{\footnotesize$m$}=(m_j)\in\mathbb{Z}^{r-1} \\
			|\Bold{\footnotesize$m$}|=0}}
		\left\{a^r_{\Bold{\footnotesize$l$},(\Bold{\footnotesize$m$}^-,m_{r-1}-l_r+1,l_r-1)}\left(\prod_{j=1}^{r-1}u_j^{l_j}\right)(-z)(u_r+z)^{l_r-1}\left(\prod_{j=1}^{r-1}v_j^{m_j}\right)\right. \nonumber\\
		&\hspace{3.2cm}+ a^r_{\Bold{\footnotesize$l$},(\Bold{\footnotesize$m$}^-,m_{r-1}-l_r+1,l_r-1)}\left(\prod_{j=1}^{r-1}u_j^{l_j}\right)(u_r+z)^{l_r}\left(\prod_{j=1}^{r-1}v_j^{m_j}\right) \nonumber\\
		&\hspace{3.2cm}\left.+ a^r_{\Bold{\footnotesize$l$},(\Bold{\footnotesize$m$}^-,m_{r-1}-l_r,l_r)}\left(\prod_{j=1}^{r-1}u_j^{l_j}\right)(u_r+z)^{l_r}\left(\prod_{j=1}^{r-1}v_j^{m_j}\right)\right\}.
	\end{align*}
		By Lemma \ref{lmm:3.1}, we get $a^r_{\Bold{\footnotesize$l$},(\Bold{\footnotesize$m$}^-,m_{r-1}+1,-1)}=0$ (i.e. the case of $l_r=0$). By replacing $l_r-1$ with $l_r$, we have
	\begin{align*}
		&\mathcal{G}_r\left(u_1,\dots,u_r; v_1,\dots,v_{r-1},\frac{u_r+z}{u_r}v_{r-1}\right) \\
		&=-z\sum_{\substack{\Bold{\footnotesize$l$}=(l_j)\in\mathbb{N}_0^r\\
			\Bold{\footnotesize$m$}=(m_j)\in\mathbb{Z}^{r-1} \\
			|\Bold{\footnotesize$m$}|=0}}
		 \left\{\vphantom{\cdot\left(\prod_{j=1}^{r-1}u_j^{l_j}\right)\left(\prod_{j=1}^{r-1}v_j^{m_j}\right)(u_r+z)^{l_r}}
		 a^r_{(\Bold{\footnotesize$l$}^-,l_r+1),(\Bold{\footnotesize$m$}^-,m_{r-1}-l_r,l_r)}\left(\prod_{j=1}^{r-1}u_j^{l_j}\right)(u_r+z)^{l_r}\left(\prod_{j=1}^{r-1}v_j^{m_j}\right)\right\} \\
		&\hspace{2.2cm}+\sum_{\substack{\Bold{\footnotesize$l$}=(l_j)\in\mathbb{N}_0^r\\
			\Bold{\footnotesize$m$}=(m_j)\in\mathbb{Z}^{r-1} \\
			|\Bold{\footnotesize$m$}|=0}}
		\left\{\vphantom{\cdot\left(\prod_{j=1}^{r-1}u_j^{l_j}\right)\left(\prod_{j=1}^{r-1}v_j^{m_j}\right)(u_r+z)^{l_r}}
		\left(a^r_{\Bold{\footnotesize$l$},(\Bold{\footnotesize$m$}^-,m_{r-1}-l_r,l_r)}+ a^r_{\Bold{\footnotesize$l$},(\Bold{\footnotesize$m$}^-,m_{r-1}-l_r+1,l_r-1)}\right)\right.\\
		&\hspace{6cm}\left.\cdot\left(\prod_{j=1}^{r-1}u_j^{l_j}\right)(u_r+z)^{l_r}\left(\prod_{j=1}^{r-1}v_j^{m_j}\right)\right\}.
	\end{align*}
On the other hand, we have
	\begin{align*}
		&(z+1)\mathcal{G}_{r-1}(u_1,\dots,u_{r-2},u_{r-1}+u_r+z; v_1,\dots,v_{r-1}) \\
		&=(z+1)\sum_{\substack{\Bold{\footnotesize$k$}=(k_j)\in\mathbb{N}_0^{r-1} \\
			\Bold{\footnotesize$m$}=(m_j)\in\mathbb{Z}^{r-1} \\
			|\Bold{\footnotesize$m$}|=0}}
		a^{r-1}_{\Bold{\footnotesize$k$},\Bold{\footnotesize$m$}}\left(\prod_{j=1}^{r-2}u_j^{k_j}\right)(u_{r-1}+u_r+z)^{k_{r-1}}\left(\prod_{j=1}^{r-1}v_j^{m_j}\right) \hspace{1.8cm} \\
		&=(z+1)\sum_{\substack{\Bold{\footnotesize$k$}=(k_j)\in\mathbb{N}_0^{r-1}\\
			\Bold{\footnotesize$m$}=(m_j)\in\mathbb{Z}^{r-1} \\
			|\Bold{\footnotesize$m$}|=0}}
		a^{r-1}_{\Bold{\footnotesize$k$},\Bold{\footnotesize$m$}}\left(\prod_{j=1}^{r-2}u_j^{k_j}\right)
			\sum_{\substack{p+q=k_{r-1} \\
			p,q\in\mathbb{N}_0}} \binom{k_{r-1}}{p}u_{r-1}^p(u_r+z)^q\left(\prod_{j=1}^{r-1}v_j^{m_j}\right).
	\end{align*}
	By using the equation (\ref{eqn:3.8}) of Lemma \ref{lmm:3.3}, we have
	{\small 
	\begin{align}\label{eqn:3.4}
		&(z+1)\mathcal{G}_{r-1}(u_1,\dots,u_{r-2},u_{r-1}+u_r+z; v_1,\dots,v_{r-1}) \\
		&=(z+1)\sum_{\substack{\Bold{\footnotesize$l$}=(l_j)\in\mathbb{N}_0^r\\
			\Bold{\footnotesize$m$}=(n_j)\in\mathbb{Z}^{r-1} \\
			|\Bold{\footnotesize$m$}|=0}} \binom{l_{r-1}+l_r}{l_{r-1}}
		a^{r-1}_{\Bold{\footnotesize$l$}',\Bold{\footnotesize$m$}}\left(\prod_{j=1}^{r-1}u_j^{l_j}\right)(u_r+z)^{l_r}\left(\prod_{j=1}^{r-1}v_j^{m_j}\right). \hspace{1.cm} \nonumber
	\end{align}}
By comparing the coefficients of (\ref{eqn:3.3}) and (\ref{eqn:3.4}), we obtain (\ref{eqn:3.2}).
	\end{proof}
\noindent
By tracing the proof of Corollary \ref{crl:3.1} inversely, we get the following proposition.
\begin{prp}\label{prp:3.1}
For $s_1,\dots,s_r,z\in\mathbb{C}$,
\begin{align}\label{eqn:3.5}
	\sum_{\substack{\Bold{\footnotesize$l$}=(l_j)\in\mathbb{N}_0^r\\
		\Bold{\footnotesize$n$}=(n_j)\in\mathbb{Z}^r \\
		|\Bold{\footnotesize$n$}|=0}}
	a^r_{\Bold{\footnotesize$l$},\Bold{\footnotesize$n$}}\left(\prod_{j=1}^r(s_j)_{l_j}\right)&\frac{\Gamma(s_r+n_r+z)\Gamma(-z)}{\Gamma(s_r+n_r)}\zeta_{r-1}(\Bold{$s$}'+\Bold{$n$}'+\Bold{$z$}') \\
	&=(1+z)\frac{\Gamma(s_r+z)\Gamma(-z)}{\Gamma(s_r)}\zeta_{r-1}^{\rm des}(\Bold{$s$}'+\Bold{$z$}'), \nonumber
\end{align}
holds except for singularities.
\end{prp}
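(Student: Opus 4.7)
The plan is to prove Proposition \ref{prp:3.1} by expanding both sides of (\ref{eqn:3.5}) as formal linear combinations of the multiple zeta functions $\zeta_{r-1}(\Bold{$s$}'+\Bold{$m$}+\Bold{$z$}')$ indexed by $\Bold{$m$}\in\mathbb{Z}^{r-1}$ with $|\Bold{$m$}|=0$, and then matching the coefficient of each such MZF on the two sides.

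On the right-hand side, I first apply Proposition \ref{prp:1.1} to $\zeta_{r-1}^{\rm des}(\Bold{$s$}'+\Bold{$z$}')$ and split the Pochhammer symbol $(s_{r-1}+s_r+z)_{k_{r-1}}$ via Lemma \ref{lmm:3.2} into $\sum_{p+q=k_{r-1}}\binom{k_{r-1}}{p}(s_{r-1})_p(s_r+z)_q$. Reindexing by equation (\ref{eqn:3.8}) of Lemma \ref{lmm:3.3} with $l_j=k_j$ for $j\leq r-2$, $l_{r-1}=p$, $l_r=q$, and absorbing $(s_r+z)_{l_r}\Gamma(s_r+z)=\Gamma(s_r+z+l_r)$, the right-hand side becomes a sum over $\Bold{$l$}\in\mathbb{N}_0^r$ and $\Bold{$m$}\in\mathbb{Z}^{r-1}$ (with $|\Bold{$m$}|=0$) whose coefficients carry $\binom{l_{r-1}+l_r}{l_{r-1}}a^{r-1}_{\Bold{$l$}',\Bold{$m$}}$. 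On the left-hand side, Lemma \ref{lmm:3.1} forces $n_r\in\{l_r-1,l_r\}$ in any nonzero term, and equation (\ref{eqn:3.7}) of Lemma \ref{lmm:3.3} parametrizes the $\Bold{$n$}$-sum by $\Bold{$m$}:=\Bold{$n$}'$, so both sides become sums indexed by the same $(\Bold{$l$},\Bold{$m$})$ and involving the same set of MZFs.

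Set $A_{\Bold{$l$},\Bold{$m$}}:=a^r_{\Bold{$l$},(\Bold{$m$}^-,m_{r-1}-l_r,l_r)}$ and $B_{\Bold{$l$},\Bold{$m$}}:=a^r_{\Bold{$l$},(\Bold{$m$}^-,m_{r-1}-l_r+1,l_r-1)}$, so that the first equality in Corollary \ref{crl:3.1} reads $A_{\Bold{$l$},\Bold{$m$}}+B_{\Bold{$l$},\Bold{$m$}}=\binom{l_{r-1}+l_r}{l_{r-1}}a^{r-1}_{\Bold{$l$}',\Bold{$m$}}$. After matching coefficients of each $\zeta_{r-1}(\Bold{$s$}'+\Bold{$m$}+\Bold{$z$}')$ and dividing by the common factor $\prod_{j=1}^{r-1}(s_j)_{l_j}\cdot\Gamma(-z)/\Gamma(s_r)$, the claim reduces, for each fixed $(\ell_1,\dots,\ell_{r-1},\Bold{$m$})$, to the identity
\begin{align*}
&\sum_{l_r\geq 0}\bigl[A_{\Bold{\footnotesize$l$},\Bold{\footnotesize$m$}}\Gamma(s_r+l_r+z)+B_{\Bold{\footnotesize$l$},\Bold{\footnotesize$m$}}(s_r+l_r-1)\Gamma(s_r+l_r-1+z)\bigr]\\
&\qquad=\sum_{l_r\geq 0}(A_{\Bold{\footnotesize$l$},\Bold{\footnotesize$m$}}+B_{\Bold{\footnotesize$l$},\Bold{\footnotesize$m$}})(1+z)\Gamma(s_r+l_r+z).
\end{align*}
Using $\Gamma(s_r+l_r+z)=(s_r+l_r-1+z)\Gamma(s_r+l_r-1+z)$ to rewrite the $B$-term as $\Gamma(s_r+l_r+z)-z\Gamma(s_r+l_r-1+z)$, and then shifting the remaining $\Gamma(s_r+l_r-1+z)$ contributions via $l_r\to l_r+1$ (the $l_r=0$ boundary term vanishing by Lemma \ref{lmm:3.1} since $B_{(\ell_1,\dots,\ell_{r-1},0),\Bold{$m$}}=a^r_{(\dots,0),(\Bold{$m$}^-,m_{r-1}+1,-1)}=0$), the difference of the two sides telescopes to $-z\sum_{l_r\geq 0}\bigl[A_{\Bold{$l$},\Bold{$m$}}+B_{\Bold{$l$},\Bold{$m$}}+B_{(\ell_1,\dots,\ell_{r-1},l_r+1),\Bold{$m$}}\bigr]\Gamma(s_r+l_r+z)$, and the bracket vanishes by the second equality in Corollary \ref{crl:3.1}.

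The main obstacle I anticipate is the careful bookkeeping of the index shifts in the telescoping step, and in particular recognizing that $B_{(\ell_1,\dots,\ell_{r-1},l_r+1),\Bold{$m$}}=a^r_{(\Bold{$l$}^-,l_r+1),(\Bold{$m$}^-,m_{r-1}-l_r,l_r)}$ is exactly the right-hand side of the second equality in Corollary \ref{crl:3.1}; once this is aligned, the cancellation is immediate, which is the sense in which the argument ``inversely traces'' the proof of Corollary \ref{crl:3.1}.
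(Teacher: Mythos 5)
Your proposal is correct and takes essentially the same route as the paper: both arguments rest on the two equalities of Corollary \ref{crl:3.1} together with Lemma \ref{lmm:3.1}, Lemma \ref{lmm:3.2}, Lemma \ref{lmm:3.3} and Proposition \ref{prp:1.1}, plus the recursion $\Gamma(s+1)=s\Gamma(s)$ in Pochhammer/Gamma form. The only difference is organizational -- the paper multiplies the coefficient identity (\ref{eqn:3.6}) by $\zeta_{r-1}(\Bold{$s$}'+\Bold{$m$}+\Bold{$z$}')$ and sums to build up (\ref{eqn:3.5}), while you expand (\ref{eqn:3.5}) and verify the same identity coefficientwise by telescoping, i.e.\ the identical computation run in the opposite direction.
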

\begin{proof}
	Let $s_1,\dots,s_r,z\in\mathbb{C}$. Using Corollary \ref{crl:3.1}, we have
	{\small
	\begin{align}\label{eqn:3.6}
		&(z+1)\sum_{\Bold{\footnotesize$l$}=(l_j)\in\mathbb{N}_0^r} \binom{l_{r-1}+l_r}{l_{r-1}}
		a^{r-1}_{\Bold{\footnotesize$l$}',\Bold{\footnotesize$m$}}\left(\prod_{j=1}^{r-1}(s_j)_{l_j}\right)(s_r+z)_{l_r} \\
		&=-z\sum_{\Bold{\footnotesize$l$}=(l_j)\in\mathbb{N}_0^r}
		 \left\{\vphantom{\cdot\left(\prod_{j=1}^{r-1}u_j^{l_j}\right)\left(\prod_{j=1}^{r-1}v_j^{m_j}\right)(u_r+z)^{l_r}}
		 a^r_{(\Bold{\footnotesize$l$}^-,l_r+1),(\Bold{\footnotesize$m$}^-,m_{r-1}-l_r,l_r)}\left(\prod_{j=1}^{r-1}(s_j)_{l_j}\right) (s_r+z)_{l_r}\right\} \nonumber\\
		&\quad+\sum_{\Bold{\footnotesize$l$}=(l_j)\in\mathbb{N}_0^r}
		\left\{\vphantom{\cdot\left(\prod_{j=1}^{r-1}u_j^{l_j}\right)\left(\prod_{j=1}^{r-1}v_j^{m_j}\right)(u_r+z)^{l_r}}
		\left(a^r_{\Bold{\footnotesize$l$},(\Bold{\footnotesize$m$}^-,m_{r-1}-l_r,l_r)}+ a^r_{\Bold{\footnotesize$l$},(\Bold{\footnotesize$m$}^-,m_{r-1}-l_r+1,l_r-1)}\right)\left(\prod_{j=1}^{r-1}(s_j)_{l_j}\right) (s_r+z)_{l_r}\right\}. \nonumber
	\end{align}}
\normalsize{By multiplying the function $\zeta_{r-1}(\Bold{$s$}'+\Bold{$m$}+\Bold{$z$}')$ and taking summation over $\Bold{$m$}\in\mathbb{Z}^{r-1}$ with $|\Bold{$m$}|=0$, we have}
	{\footnotesize
	\begin{align*}
		&\sum_{\substack{\Bold{\footnotesize$m$}=(m_j)\in\mathbb{Z}^{r-1} \\
			|\Bold{\footnotesize$m$}|=0}}
		\mbox{\normalsize{(R.H.S. of (\ref{eqn:3.6}))}}\cdot\zeta_{r-1}(\Bold{$s$}'+\Bold{$m$}+\Bold{$z$}') \\
		&=-z\sum_{\substack{\Bold{\footnotesize$l$}=(l_j)\in\mathbb{N}_0^r\\
			\Bold{\footnotesize$m$}=(m_j)\in\mathbb{Z}^{r-1} \\
			|\Bold{\footnotesize$m$}|=0}}
		 \left\{\vphantom{\cdot\left(\prod_{j=1}^{r-1}u_j^{l_j}\right)\left(\prod_{j=1}^{r-1}v_j^{m_j}\right)(u_r+z)^{l_r}}
		 a^r_{(\Bold{\footnotesize$l$}^-,l_r+1),(\Bold{\footnotesize$m$}^-,m_{r-1}-l_r,l_r)}\left(\prod_{j=1}^{r-1}(s_j)_{l_j}\right) (s_r+z)_{l_r}\zeta_{r-1}(\Bold{$s$}'+\Bold{$m$}+\Bold{$z$}')\right\}\\
			&\hspace{1cm}+\sum_{\substack{\Bold{\footnotesize$l$}=(l_j)\in\mathbb{N}_0^r\\
			\Bold{\footnotesize$m$}=(m_j)\in\mathbb{Z}^{r-1} \\
			|\Bold{\footnotesize$m$}|=0}}
		\left\{\vphantom{\cdot\left(\prod_{j=1}^{r-1}u_j^{l_j}\right)\left(\prod_{j=1}^{r-1}v_j^{m_j}\right)(u_r+z)^{l_r}}
		\left(a^r_{\Bold{\footnotesize$l$},(\Bold{\footnotesize$m$}^-,m_{r-1}-l_r,l_r)}+ a^r_{\Bold{\footnotesize$l$},(\Bold{\footnotesize$m$}^-,m_{r-1}-l_r+1,l_r-1)}\right)\right. \\
		&\hspace{5cm}\left.\cdot\left(\prod_{j=1}^{r-1}(s_j)_{l_j}\right) (s_r+z)_{l_r}\zeta_{r-1}(\Bold{$s$}'+\Bold{$m$}+\Bold{$z$}')\right\} \\
		&=\sum_{\substack{\Bold{\footnotesize$l$}=(l_j)\in\mathbb{N}_0^r\\
			\Bold{\footnotesize$m$}=(m_j)\in\mathbb{Z}^{r-1} \\
			|\Bold{\footnotesize$m$}|=0}}
		\left\{a^r_{\Bold{\footnotesize$l$},(\Bold{\footnotesize$m$}^-,m_{r-1}-l_r+1,l_r-1)}\left(\prod_{j=1}^{r-1}(s_j)_{l_j}\right) (-z)(s_r+z)_{l_r-1}\zeta_{r-1}(\Bold{$s$}'+\Bold{$m$}+\Bold{$z$}')\right. \\
		&\hspace{2.5cm}+ a^r_{\Bold{\footnotesize$l$},(\Bold{\footnotesize$m$}^-,m_{r-1}-l_r+1,l_r-1)}\left(\prod_{j=1}^{r-1}(s_j)_{l_j}\right) (s_r+z)_{l_r}\zeta_{r-1}(\Bold{$s$}'+\Bold{$m$}+\Bold{$z$}') \\
		&\hspace{3cm}\left.+ a^r_{\Bold{\footnotesize$l$},(\Bold{\footnotesize$m$}^-,m_{r-1}-l_r,l_r)}\left(\prod_{j=1}^{r-1}(s_j)_{l_j}\right) (s_r+z)_{l_r}\zeta_{r-1}(\Bold{$s$}'+\Bold{$m$}+\Bold{$z$}')\right\}  
		\end{align*}
		\begin{align*}
		&=\sum_{\substack{\Bold{\footnotesize$l$}=(l_j)\in\mathbb{N}_0^r\\
			\Bold{\footnotesize$m$}=(m_j)\in\mathbb{Z}^{r-1} \\
			|\Bold{\footnotesize$m$}|=0}}
		\left\{a^r_{\Bold{\footnotesize$l$},(\Bold{\footnotesize$m$}^-,m_{r-1}-l_r+1,l_r-1)}\left(\prod_{j=1}^{r-1}(s_j)_{l_j}\right) (s_r+l_r-1)(s_r+z)_{l_r-1}\zeta_{r-1}(\Bold{$s$}'+\Bold{$m$}+\Bold{$z$}')\right. \nonumber \\
		&\hspace{3cm}\left.+ a^r_{\Bold{\footnotesize$l$},(\Bold{\footnotesize$m$}^-,m_{r-1}-l_r,l_r)}\left(\prod_{j=1}^{r-1}(s_j)_{l_j}\right) (s_r+z)_{l_r}\zeta_{r-1}(\Bold{$s$}'+\Bold{$m$}+\Bold{$z$}')\right\}.  \nonumber \\
	\end{align*}}
	By Lemma \ref{lmm:3.1}, we get $a^r_{\Bold{\footnotesize$l$},(\Bold{\footnotesize$m$}^-,p,q)}=0$ for $q\neq l_r-1,l_r$. So we have
	\begin{align*}
		&\sum_{\substack{\Bold{\footnotesize$m$}=(m_j)\in\mathbb{Z}^{r-1} \\
			|\Bold{\footnotesize$m$}|=0}}
		\mbox{\normalsize{(R.H.S. of (\ref{eqn:3.6}))}}\cdot\zeta_{r-1}(\Bold{$s$}'+\Bold{$m$}+\Bold{$z$}') \nonumber\\
		&=\sum_{\substack{\Bold{\footnotesize$l$}=(l_j)\in\mathbb{N}_0^r\\
			\Bold{\footnotesize$m$}=(m_j)\in\mathbb{Z}^{r-1} \\
			|\Bold{\footnotesize$m$}|=0}}
			\sum_{\substack{p+q=m_{r-1}\\
				p,q\in\mathbb{Z}}}
		a^r_{\Bold{\footnotesize$l$},(\Bold{\footnotesize$m$}^-,p,q)}\left(\prod_{j=1}^r(s_j)_{l_j}\right) \frac{(s_r+z)_q}{(s_r)_q}\zeta_{r-1}(\Bold{$s$}'+\Bold{$m$}+\Bold{$z$}').
	\end{align*}
	By using the equation (\ref{eqn:3.7}) of Lemma \ref{lmm:3.3}, we have
	\begin{align}\label{eqn:4.10}
		&\sum_{\substack{\Bold{\footnotesize$m$}=(m_j)\in\mathbb{Z}^{r-1} \\
			|\Bold{\footnotesize$m$}|=0}}
		\mbox{\normalsize{(R.H.S. of (\ref{eqn:3.6}))}}\cdot\zeta_{r-1}(\Bold{$s$}'+\Bold{$m$}+\Bold{$z$}') \\
		&=\sum_{\substack{\Bold{\footnotesize$l$}=(l_j)\in\mathbb{N}_0^r\\
			\Bold{\footnotesize$n$}=(n_j)\in\mathbb{Z}^r \\
			|\Bold{\footnotesize$n$}|=0}}
		a^r_{\Bold{\footnotesize$l$},\Bold{\footnotesize$n$}}\left(\prod_{j=1}^r(s_j)_{l_j}\right)\frac{(s_r+z)_{n_r}}{(s_r)_{n_r}}\zeta_{r-1}(\Bold{$s$}'+\Bold{$n$}'+\Bold{$z$}'). \nonumber
	\end{align}
	We have $\Gamma(s+n)=(s)_n\Gamma(s)$ for $s\in\mathbb{C}$ and $n\in\mathbb{N}_0$, by the relation $\Gamma(s+1)=s\Gamma(s)$. By multiplying the equation (\ref{eqn:4.10}) with ${\Gamma(s_r+z)\Gamma(-z)}/{\Gamma(s_r)}$, we obtain
	\begin{align}\label{eqn:4.11}
		&\frac{\Gamma(s_r+z)\Gamma(-z)}{\Gamma(s_r)}\cdot (\mbox{L.H.S. of (\ref{eqn:4.10})}) \\
		&=\sum_{\substack{\Bold{\footnotesize$l$}=(l_j)\in\mathbb{N}_0^r\\
			\Bold{\footnotesize$n$}=(n_j)\in\mathbb{Z}^r \\
			|\Bold{\footnotesize$n$}|=0}}
		a^r_{\Bold{\footnotesize$l$},\Bold{\footnotesize$n$}}\left(\prod_{j=1}^r(s_j)_{l_j}\right)\frac{\Gamma(s_r+n_r+z)\Gamma(-z)}{\Gamma(s_r+n_r)}\zeta_{r-1}(\Bold{$s$}'+\Bold{$n$}'+\Bold{$z$}'). \nonumber
	\end{align}
	\normalsize{On the other hand, we have}
	\begin{align*}
		&\sum_{\substack{\Bold{\footnotesize$m$}=(m_j)\in\mathbb{Z}^{r-1} \\
			|\Bold{\footnotesize$m$}|=0}}
		\mbox{(L.H.S. of (\ref{eqn:3.6}))}\cdot\zeta_{r-1}(\Bold{$s$}'+\Bold{$m$}+\Bold{$z$}') \\
		&=(z+1)\sum_{\substack{\Bold{\footnotesize$l$}=(l_j)\in\mathbb{N}_0^r\\
			\Bold{\footnotesize$m$}=(m_j)\in\mathbb{Z}^{r-1} \\
			|\Bold{\footnotesize$m$}|=0}} \binom{l_{r-1}+l_r}{l_{r-1}}
		a^{r-1}_{\Bold{\footnotesize$l$}',\Bold{\footnotesize$m$}}\left(\prod_{j=1}^{r-1}(s_j)_{l_j}\right)(s_r+z)_{l_r}\zeta_{r-1}(\Bold{$s$}'+\Bold{$m$}+\Bold{$z$}').
		\intertext{By using the equation (\ref{eqn:3.8}) of Lemma \ref{lmm:3.3}, we have}
		&=(z+1)\sum_{\substack{\Bold{\footnotesize$k$}=(k_j)\in\mathbb{N}_0^{r-1}\\
			\Bold{\footnotesize$m$}=(m_j)\in\mathbb{Z}^{r-1} \\
			|\Bold{\footnotesize$m$}|=0}}
			\sum_{\substack{p+q=k_{r-1} \\
			p,q\in\mathbb{N}_0}} \binom{k_{r-1}}{p}
		a^{r-1}_{\Bold{\footnotesize$k$},\Bold{\footnotesize$m$}} \\
		&\hspace{4.5cm}\cdot\left(\prod_{j=1}^{r-2}(s_j)_{k_j}\right)(s_{r-1})_p(s_r+z)_q\zeta_{r-1}(\Bold{$s$}'+\Bold{$m$}+\Bold{$z$}').
	\end{align*}
		Using Lemma \ref{lmm:3.2}, we have
	{\small 
	\begin{align}\label{eqn:4.12}
		&\sum_{\substack{\Bold{\footnotesize$m$}=(m_j)\in\mathbb{Z}^{r-1} \\
			|\Bold{\footnotesize$m$}|=0}}
		\mbox{(L.H.S. of (\ref{eqn:3.6}))}\cdot\zeta_{r-1}(\Bold{$s$}'+\Bold{$m$}+\Bold{$z$}') \\
		&=(z+1)\sum_{\substack{\Bold{\footnotesize$k$}=(k_j)\in\mathbb{N}_0^{r-1} \\
			\Bold{\footnotesize$m$}=(m_j)\in\mathbb{Z}^{r-1} \\
			|\Bold{\footnotesize$m$}|=0}}
		a^{r-1}_{\Bold{\footnotesize$k$},\Bold{\footnotesize$m$}}\left(\prod_{j=1}^{r-2}(s_j)_{k_j}\right)(s_{r-1}+s_r+z)_{k_r-1}\zeta_{r-1}(\Bold{$s$}'+\Bold{$m$}+\Bold{$z$}'). \nonumber
	\end{align}}
		By multiplying the equation (\ref{eqn:4.12}) with ${\Gamma(s_r+z)\Gamma(-z)}/{\Gamma(s_r)}$ and by the equation (\ref{eqn:1.1.5}) of the desingularized function $\zeta_r^{\rm des}(\Bold{$s$})$, we obtain
	\begin{align}\label{eqn:4.13}
		\frac{\Gamma(s_r+z)\Gamma(-z)}{\Gamma(s_r)}\cdot(\mbox{R.H.S. of (\ref{eqn:4.12})})=(1+z)\frac{\Gamma(s_r+z)\Gamma(-z)}{\Gamma(s_r)}\zeta_{r-1}^{\rm des}(\Bold{$s$}'+\Bold{$z$}').
	\end{align}
So we obtain the equation (\ref{eqn:3.5}), by combining the equations (\ref{eqn:4.11}) and (\ref{eqn:4.13}) because we have $(\ref{eqn:4.10})=(\ref{eqn:4.12})$.
\end{proof}

\begin{prp}\label{thm:3.1}
For $s_1,\dots,s_{r-1}\in\mathbb{C}$ and $k\in\mathbb{N}_0$, we have 
\begin{equation}\label{eqn:4.2}
\zeta_r^{\rm des}(s_1,\dots,s_{r-1},-k)=\sum_{i+j=k}\binom{k}{i}\zeta_{r-1}^{\rm des}(s_1,\dots,s_{r-2},s_{r-1}-i)\zeta_1^{\rm des}(-j).
\end{equation}
\end{prp}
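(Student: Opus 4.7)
The plan is to use the Hankel contour integral representation of $\zeta_r^{\rm des}$ from Proposition \ref{prp:1.2}. After performing the $c \to 1$ limit in the integrand and invoking Proposition \ref{prp:1.1.1} (which identifies the resulting integrand as $Z_{\scalebox{0.5}{\rm FKMT}}(t_1,\ldots,t_r)$), one obtains the Mellin-type formula $\zeta_r^{\rm des}(s_1,\ldots,s_r) = \prod_{k=1}^r \frac{1}{(e^{2\pi i s_k}-1)\Gamma(s_k)} \int_{\mathcal{C}^r} Z_{\scalebox{0.5}{\rm FKMT}}(t_1,\ldots,t_r) \prod_k t_k^{s_k-1}\, dt_k$. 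Specializing $s_r = -k$, the prefactor $\frac{1}{(e^{2\pi i s_r}-1)\Gamma(s_r)}$ has the finite limit $(-1)^k k!/(2\pi i)$; and since $t_r^{-k-1}$ is single-valued, the top and bottom portions of the Hankel contour cancel, leaving only the small loop around $t_r=0$, which evaluates to $\frac{2\pi i}{k!}\partial_{t_r}^k Z_{\scalebox{0.5}{\rm FKMT}}\bigr|_{t_r=0}$. Combining these factors reduces $\zeta_r^{\rm des}(s_1,\ldots,s_{r-1},-k)$ to $(-1)^k$ times an $(r-1)$-fold Mellin integral of $\partial_{t_r}^k Z_{\scalebox{0.5}{\rm FKMT}}\bigr|_{t_r=0}$.

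Next, I would apply the product recurrence $Z_{\scalebox{0.5}{\rm FKMT}}(t_1,\ldots,t_r) = Z_{\scalebox{0.5}{\rm FKMT}}(t_1,\ldots,t_{r-1}+t_r) \cdot Z_{\scalebox{0.5}{\rm FKMT}}(t_r)$, which follows at once from the factored form in Proposition \ref{prp:1.1.1} (parallel in spirit to Corollary \ref{crl:1.1.1}), and expand $\partial_{t_r}^k$ by the Leibniz rule. Since $\partial_{t_r}^j\bigr|_{t_r=0} Z_{\scalebox{0.5}{\rm FKMT}}(t_r) = (-1)^j \zeta_1^{\rm des}(-j)$ by the generating-function definition, and $\partial_{t_r}^i\bigr|_{t_r=0} Z_{\scalebox{0.5}{\rm FKMT}}(t_1,\ldots,t_{r-1}+t_r) = \partial_{t_{r-1}}^i Z_{\scalebox{0.5}{\rm FKMT}}(t_1,\ldots,t_{r-1})$ by the chain rule, this expansion reads $\partial_{t_r}^k Z_{\scalebox{0.5}{\rm FKMT}}\bigr|_{t_r=0} = \sum_{i+j=k}\binom{k}{i}(-1)^j \zeta_1^{\rm des}(-j) \cdot \partial_{t_{r-1}}^i Z_{\scalebox{0.5}{\rm FKMT}}(t_1,\ldots,t_{r-1})$.

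Finally, for each term in the Leibniz sum I would integrate by parts $i$ times in $t_{r-1}$ along the Hankel contour; boundary contributions at infinity vanish by the exponential decay of $Z_{\scalebox{0.5}{\rm FKMT}}$, and the iteration produces a factor $(-1)^i\, \Gamma(s_{r-1})/\Gamma(s_{r-1}-i)$ while shifting the exponent of $t_{r-1}$ from $s_{r-1}-1$ to $s_{r-1}-i-1$. Using $e^{2\pi i(s_{r-1}-i)} = e^{2\pi i s_{r-1}}$ for integer $i$, the resulting $(r-1)$-fold integral together with its prefactors is precisely the Mellin representation of $\zeta_{r-1}^{\rm des}(s_1,\ldots,s_{r-2},s_{r-1}-i)$, with all Gamma and exponential factors telescoping cleanly. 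The accumulated signs $(-1)^k \cdot (-1)^j \cdot (-1)^i = (-1)^{k+(i+j)} = 1$ combine to give exactly formula (\ref{eqn:4.2}). The main obstacle is the rigorous justification of the integration by parts and of the reduction to a residue on the Hankel contour, in particular controlling the singularities of $Z_{\scalebox{0.5}{\rm FKMT}}$ at the nonzero multiples of $2\pi i$ in each variable.
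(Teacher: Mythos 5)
Your proposal is correct in outline, but it follows a genuinely different route from the paper. The paper never touches the Hankel-contour definition directly at this point: it starts from the representation of $\zeta_r^{\rm des}$ as a finite linear combination of shifted MZFs (Proposition \ref{prp:1.1}), feeds each shifted MZF into Matsumoto's Mellin--Barnes recursion $\zeta_r(\Bold{$s$})=\frac{1}{2\pi i}\int_{(c)}\frac{\Gamma(s_r+z)\Gamma(-z)}{\Gamma(s_r)}\zeta_{r-1}(\Bold{$s$}'+\Bold{$z$}')\zeta(-z)dz$, and then uses the combinatorial identity among the coefficients $a^r_{\Bold{\footnotesize$l$},\Bold{\footnotesize$m$}}$ (Proposition \ref{prp:3.1} and Corollary \ref{crl:3.1}, proved via the generating function $\mathcal{G}_r$) to collapse the integrand to $(1+z)\frac{\Gamma(s_r+z)\Gamma(-z)}{\Gamma(s_r)}\zeta_{r-1}^{\rm des}(\Bold{$s$}'+\Bold{$z$}')$; the formula then falls out by shifting the vertical contour past $z=0,\dots,k$, with the leftover integral killed at $s_r=-k$ by $1/\Gamma(-k)=0$. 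You instead work with the FKMT contour integral itself, identify the $c\to1$ integrand with the closed product form of $Z_{\mathrm{FKMT}}$, reduce the $t_r$-integration at $s_r=-k$ to the Taylor coefficient $\partial_{t_r}^k Z_{\mathrm{FKMT}}|_{t_r=0}$, and reassemble the depth-$(r-1)$ representation via the factorization $Z_{\mathrm{FKMT}}(t_1,\dots,t_r)=Z_{\mathrm{FKMT}}(t_1,\dots,t_{r-2},t_{r-1}+t_r)\,Z_{\mathrm{FKMT}}(t_r)$, Leibniz, and $i$-fold integration by parts; your sign and Gamma bookkeeping checks out, and the factorization is indeed immediate from Proposition \ref{prp:1.1.1}. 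What your route buys is self-containedness: it bypasses Matsumoto's formula and all of the $\mathcal{G}_r$/$a^r_{\Bold{\footnotesize$l$},\Bold{\footnotesize$m$}}$ combinatorics (Lemmas \ref{lmm:3.2}--\ref{lmm:3.3}, Corollary \ref{crl:3.1}). What it costs is exactly the analysis you flag: the prefactor $\frac{1}{(e^{2\pi i s_r}-1)\Gamma(s_r)}$ is of the form $0\cdot\infty$ at $s_r=-k$, so the residue reduction must be phrased as a limit (splitting ray and circle parts, the ray contribution carrying the vanishing factor $1/\Gamma(s_r)$) and justified by the entirety statement of Proposition \ref{prp:1.2}; the integration by parts needs the exponential decay of $Z_{\mathrm{FKMT}}$ and its derivatives along the contour, a check that $\varepsilon$ is small enough to avoid the singularities on $2\pi i\mathbb{Z}\setminus\{0\}$, and an analytic-continuation step to cover the values of $s_{r-1}-i$ where the Hankel prefactor again degenerates. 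The paper outsources precisely this kind of analytic work to Matsumoto's known results and keeps the new contribution algebraic, so if you write your argument up you should supply those estimates explicitly rather than leaving them as a remark.
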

\begin{proof}
Let $\Bold{$s$}:=(s_1,\dots,s_r)\in\mathbb{C}^r$. By Mellin-Barnes integral formula, we obtain the following formula (\cite[the equation (3.7)]{Matsumoto});
\begin{equation*}
	\zeta_r(\Bold{$s$})=\frac{1}{2\pi i}\int_{(c)}\frac{\Gamma(s_r+z)\Gamma(-z)}{\Gamma(s_r)}\zeta_{r-1}(\Bold{$s$}'+\Bold{$z$}')\zeta(-z)dz,
\end{equation*}
for $\Re{(s_j)}>1\ (1\leq j\leq r)$, $-\Re{(s_r)}<c<0$ and the path of
integration is the vertical line $\Re{(z)}=c$. By this formula and the definition of $\zeta_r^{\rm des}(\Bold{$s$})$, we have
\begin{align*}
	\zeta_r^{\rm des}(\Bold{$s$})
	=&\sum_{\substack{\mbox{\boldmath {\footnotesize$l$}}=(l_j)\in\mathbb{N}_0^r\\ \mbox{\boldmath {\footnotesize$n$}}=(n_j)\in\mathbb{Z}^r \\ |\mbox{\boldmath {\footnotesize$n$}}|=0}}a^r_{\mbox{\boldmath {\footnotesize$l$}},\mbox{\boldmath {\footnotesize$n$}}}\left(\prod_{j=1}^r(s_j)_{l_j}\right)\zeta(\Bold{$s$}+\Bold{$n$}). \\
	=&\frac{1}{2\pi i}\int_{(c)}\sum_{\substack{\mbox{\boldmath {\footnotesize$l$}}=(l_j)\in\mathbb{N}_0^r\\ \mbox{\boldmath {\footnotesize$n$}}=(n_j)\in\mathbb{Z}^r \\ |\mbox{\boldmath {\footnotesize$n$}}|=0}}a^r_{\mbox{\boldmath {\footnotesize$l$}},\mbox{\boldmath {\footnotesize$n$}}}\left(\prod_{j=1}^r(s_j)_{l_j}\right)\frac{\Gamma(s_r+n_r+z)\Gamma(-z)}{\Gamma(s_r+n_r)} \\
	&\hspace{5cm}\cdot\zeta_{r-1}(\Bold{$s$}'+\Bold{$n$}'+\Bold{$z$}')\zeta(-z)dz. \\
	\intertext{Using Proposition \ref{prp:3.1}, we get}
	=&\frac{1}{2\pi i}\int_{(c)}(1+z)\frac{\Gamma(s_r+z)\Gamma(-z)}{\Gamma(s_r)}\zeta_{r-1}^{\rm des}(\Bold{$s$}'+\Bold{$z$}')\zeta(-z)dz. \\
	\intertext{By Proposition \ref{prp:1.1}, we have the formula $\zeta_1^{\rm des}(s)=(1-s)\zeta(s)$, so we obtain}
	=&\frac{1}{2\pi i}\int_{(c)}\frac{\Gamma(s_r+z)\Gamma(-z)}{\Gamma(s_r)}\zeta_{r-1}^{\rm des}(\Bold{$s$}'+\Bold{$z$}')\zeta_1^{\rm des}(-z)dz. \\
	\intertext{For $M\in\mathbb{N}$ and sufficiently small $\varepsilon>0$, we set $\mathcal{D}:=\{z\in\mathbb{C}\ |\ c<\Re{(z)}<M-\varepsilon\}$. For $z\in\mathcal{D}$, we have $\Re{(s_r+z)}>0$ by $-\Re{(s_r)}<c<0$. So singularities of the above integrand, which lie on $\mathcal{D}$, are only $z=0,1,2,\dots,M-1$. By using the residue theorem, we get}
	=&-\sum_{j=0}^{M-1}{\rm Res}\left[\frac{\Gamma(s_r+z)\Gamma(-z)}{\Gamma(s_r)}\zeta_{r-1}^{\rm des}(\Bold{$s$}'+\Bold{$z$}')\zeta_1^{\rm des}(-z),z=j\right] \\
	&+\frac{1}{2\pi i}\int_{(M-\varepsilon)}\frac{\Gamma(s_r+z)\Gamma(-z)}{\Gamma(s_r)}\zeta_{r-1}^{\rm des}(\Bold{$s$}'+\Bold{$z$}')\zeta_1^{\rm des}(-z)dz.
	\intertext{By the same arguments to those of \cite{Matsumoto}, the above second term converge. By using the fact that the residue of gamma function $\Gamma(s)$ at $s=-j$ is $\frac{(-1)^j}{j!}$, we get}
	=&\sum_{j=0}^{M-1}\binom{-s_r}{j}\zeta_{r-1}^{\rm des}(s_1,\dots,s_{r-2},s_{r-1}+s_r+j)\zeta_1^{\rm des}(-j) \\
	&+\frac{1}{2\pi i\Gamma(s_r)}\int_{(M-\varepsilon)}\Gamma(s_r+z)\Gamma(-z)\zeta_{r-1}^{\rm des}(\Bold{$s$}'+\Bold{$z$}')\zeta_1^{\rm des}(-z)dz. \\
\end{align*}
Setting $s_r=-k$ and $M=k+1$ for $k\in\mathbb{N}_0$, we obtain
\begin{equation*}
	\zeta_r^{\rm des}(s_1,\dots,s_{r-1},-k)=\sum_{j=0}^{k}\binom{k}{j}\zeta_{r-1}^{\rm des}(s_1,\dots,s_{r-2},s_{r-1}-k+j)\zeta_1^{\rm des}(-j),
\end{equation*}
because $1/\Gamma(-k)=0$ for $k\in\mathbb{N}_0$.
\end{proof}

\begin{rem}
	 In case of $r=2$, the above theorem recovers the equation
	\begin{equation*}
		\zeta_2^{\rm des}(s,-N)=\sum_{i+j=N}
		\binom{N}{i}\zeta_1^{\rm des}(s-i)\zeta_1^{\rm des}(-j)
	\end{equation*}
	shown in \cite[Proposition 4.3]{FKMT2}.
\end{rem}
By Proposition \ref{thm:3.1}, we obtain the following corollary.
\begin{prp}\label{crl:3.2}
	For $s_1,\dots,s_{r-1}\in\mathbb{C}$ and $l\in\mathbb{N}_0$, we have
	\begin{equation}\label{eqn:4.1}
		\zeta_{r-1}^{\rm des}(s_1,\dots,s_{r-1})\zeta_1^{\rm des}(-l)=\sum_{i+j=l}(-1)^i\binom{l}{i}\zeta_r^{\rm des}(s_1,\dots,s_{r-2},s_{r-1}-i,-j).
	\end{equation}
\end{prp}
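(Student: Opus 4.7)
The plan is to derive (\ref{eqn:4.1}) directly from Proposition \ref{thm:3.1} by a binomial collapse, with no additional analytic input. Introduce the shorthand $B(c):=\zeta_{r-1}^{\rm des}(s_1,\dots,s_{r-2},s_{r-1}-c)$ and $C(b):=\zeta_1^{\rm des}(-b)$, so that applying Proposition \ref{thm:3.1} with $s_{r-1}$ replaced by $s_{r-1}-i$ and $k$ replaced by $j$ yields $\zeta_r^{\rm des}(s_1,\dots,s_{r-2},s_{r-1}-i,-j)=\sum_{a+b=j}\binom{j}{a}B(i+a)C(b)$. Substituting this into the right-hand side of (\ref{eqn:4.1}) produces the double sum
$$\sum_{i+j=l}(-1)^i\binom{l}{i}\sum_{a+b=j}\binom{j}{a}B(i+a)C(b).$$

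Next, I would reindex by $(c,b):=(i+a,b)$, which satisfies $c+b=l$ since $i+j=l$ and $a+b=j$. Using the trinomial identity $\binom{l}{i}\binom{l-i}{a}=\binom{l}{c}\binom{c}{i}$, valid because $l-i-a=b$ so both sides equal $l!/(i!\,a!\,b!)$, the double sum rewrites as
$$\sum_{c+b=l}\binom{l}{c}B(c)C(b)\sum_{i=0}^{c}(-1)^i\binom{c}{i}.$$
The inner alternating binomial sum vanishes for $c\geq 1$ and equals $1$ for $c=0$, so only the term $c=0$, $b=l$ survives. This leaves $B(0)C(l)=\zeta_{r-1}^{\rm des}(s_1,\dots,s_{r-1})\zeta_1^{\rm des}(-l)$, which is precisely the left-hand side of (\ref{eqn:4.1}).

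There is no real obstacle in this argument; the only care required is the trinomial reindexing step and recognizing that the alternating sum $\sum_{i=0}^{c}(-1)^i\binom{c}{i}$ acts as a Kronecker delta at $c=0$. In essence, Proposition \ref{crl:3.2} is the binomial M\"obius inverse of Proposition \ref{thm:3.1}, which makes the derivation purely combinatorial once the latter is in hand.
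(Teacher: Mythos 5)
Your argument is correct, and it is a genuinely different route from the paper's. The paper proves Proposition \ref{crl:3.2} by induction on $l$: it isolates $\zeta_{r-1}^{\rm des}(s_1,\dots,s_{r-1})\zeta_1^{\rm des}(-l_0)$ from equation (\ref{eqn:4.2}) with $k=l_0$, and then applies the induction hypothesis to each product $\zeta_{r-1}^{\rm des}(s_1,\dots,s_{r-2},s_{r-1}-l_0+j)\zeta_1^{\rm des}(-j)$ with $j\leq l_0-1$, leaving the final binomial bookkeeping implicit. You instead verify the identity directly: you substitute (\ref{eqn:4.2}), applied at the shifted argument $s_{r-1}-i$ (legitimate, since Proposition \ref{thm:3.1} holds for all complex $s_{r-1}$ and the desingularized function is entire), into the right-hand side of (\ref{eqn:4.1}), and collapse the double sum via the trinomial reindexing $\binom{l}{i}\binom{l-i}{a}=\binom{l}{c}\binom{c}{i}$ and the Kronecker-delta identity $\sum_{i=0}^{c}(-1)^i\binom{c}{i}=\delta_{c,0}$; these steps check out. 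What your approach buys is transparency and a closed-form statement of the relationship: it exhibits Propositions \ref{thm:3.1} and \ref{crl:3.2} explicitly as binomial inverses of one another, with all combinatorics displayed and no recursion. The paper's induction is shorter to state and avoids the reindexing, but hides essentially the same binomial cancellation inside the inductive step. Either proof rests solely on Proposition \ref{thm:3.1}, so there is no gap in yours.
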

\begin{proof}
	We prove this claim by induction on $l$. It is clear that the case of $l=0$ follows from the case of $k=0$ of Proposition \ref{thm:3.1}. By putting $k=l_0\ (\geq1)$ in the equation (\ref{eqn:4.2}), we get
	\begin{align*}
		\zeta_{r-1}^{\rm des}(s_1&,\dots,s_{r-1})\zeta_1^{\rm des}(-l_0) \\
		&=\zeta_r^{\rm des}(s_1,\dots,s_{r-1},-l_0)-\sum_{j=0}^{l_0-1}\binom{l_0}{j}\zeta_{r-1}^{\rm des}(s_1,\dots,s_{r-2},s_{r-1}-l_0+j)\zeta_1^{\rm des}(-j).
	\end{align*}
	In the second term of the right hand side of this equation, by using our induction hypothesis (i.e. the equation (\ref{eqn:4.1}) in the case of $0\leq l\leq l_0-1$), we obtain the equation (\ref{eqn:4.1}) of $l=l_0$.
\end{proof}
Putting $p=r-1$ and $q=1$ in Theorem \ref{prop:2.1}, we obtain
	\begin{equation*}
		\zeta_{r-1}^{\rm des}(-k_1,\dots,-k_{r-1})\zeta_1^{\rm des}(-l)= \sum_{i + j=l}(-1)^{i}\binom{l}{i} \zeta_{r}^{\rm des}(-k_1,\dots,-k_{r-2},-k_{r-1}-i,-j)
	\end{equation*}
for $k_1,\dots,k_{r-1},l\in\mathbb{N}_0$. Therefore the equation (\ref{eqn:4.1}) can be regarded as a generalization of this equation. 

\begin{rem}
	In our  forthcoming paper \cite{Komi2}, we will show a more general formula which extends  both Theorem \ref{prop:2.1} and Proposition \ref{crl:3.2}.
\end{rem}

\bigskip
\thanks{ {\it Acknowledgements}. The author is cordially grateful to Professor H. Furusho for guiding him towards this topic and for giving  useful suggestions to him. He greatly appreciates the referee's numerous and helpful comments. This work was supported by JSPS KAKENHI Grant Number JP18J14774.}


\end{document}